\newtheorem{theorem}{Theorem}
\newtheorem*{theorem*}{Theorem}
\newtheorem{lemma}{Lemma}
\newtheorem*{lemmaA}{Lemma A2}
\newtheorem*{lemmaB}{Lemma A1}
\newtheorem{claim}{Claim}
\theoremstyle{definition}
\newtheorem*{definition*}{\sc Definition}
\newtheorem{remark}{\bf Remark}
\newtheorem*{remark*}{\bf Remark}
\newtheorem*{example*}{\bf Example}
\newcommand{\loc}{{\rm loc}}
\newcommand{\dist}{\mbox{dist}}
\newcommand{\const}{{\rm const}}
\newcommand{\cl}{{\rm clos}}
\newcommand{\clos}{{\rm clos}}
\newcommand{\thistheoremname}{}
\newtheorem*{genericthm*}{\thistheoremname}
\newenvironment{namedthm*}[1]
  {\renewcommand{\thistheoremname}{#1}%
   \begin{genericthm*}}
  {\end{genericthm*}}
\def\expandafter\normalsize\expandafter{%
    \normalsize
    \setlength\abovedisplayshortskip{8pt}
    \setlength\belowdisplayshortskip{8pt}
}
\begin{document}

\title{Stochastic differential equations with singular (form-bounded) drift}

\author{D.\,Kinzebulatov and Yu.\,A.\,Sem\"{e}nov} 

\address{Universit\'{e} Laval, D\'{e}partement de math\'{e}matiques et de statistique, pavillon Alexandre-Vachon 1045, av. de la M\'{e}decine, Qu\'{e}bec, PQ, G1V 0A6, Canada}

\email{damir.kinzebulatov@mat.ulaval.ca}

\thanks{D.K.\,is supported by grants of NSERC and FRQNT}

\address{University of Toronto, Department of Mathematics, 40 St.\,George Str., Toronto, ON, M5S 2E4, Canada}

\email{semenov.yu.a@gmail.com}

\subjclass[2010]{60H10, 47D07 (primary), 35J75 (secondary)}

\keywords{Elliptic operators, Feller processes, stochastic differential equations}

\begin{abstract}
We consider the problem of constructing weak solutions to the It\^{o} and to the Stratonovich stochastic differential equations having critical-order singularities in the drift and critical-order discontinuities in the dispersion matrix. 
\end{abstract}

\maketitle

\textbf{1.~}We consider the problem of constructing  weak solutions to the It\^{o} stochastic differential equation (SDE)
\begin{equation}
\label{sde1}
\tag{$I$}
X(t)=x - \int_0^t b(X(s))ds + \sqrt{2} \int_0^t \sigma(X(s))dW(s), \quad x \in \mathbb R^d,
\end{equation}
($d \geq 3$) and to the Stratonovich SDE
\begin{equation}
\label{sde2}
\tag{$S$}
X(t)=x - \int_0^t b(X(s))ds  + \sqrt{2} \int_0^t \sigma(X(s)) \circ dW(s), \quad x \in \mathbb R^d,
\end{equation}
under the following assumptions on the drift $b:\mathbb R^d \rightarrow \mathbb R^d$ and the dispersion matrix 
$\sigma \in L^\infty(\mathbb R^d,\mathbb R^d \otimes \mathbb R^d)$:

1)  $b$ is form-bounded, i.e.\,$|b|^2 \in L^2_{\loc} \equiv L^2_{\loc}(\mathbb R^d)$ and 
$$
\||b|(\lambda -\Delta)^{-\frac{1}{2}}\|_{2\rightarrow 2}\leqslant \sqrt{\delta}
$$
for $\delta>0$ and $\lambda=\lambda_\delta>0$ 
(write $b \in \mathbf{F}_\delta$). Here $\|\cdot\|_{2 \rightarrow 2}:=\|\cdot\|_{L^2 \rightarrow L^2}$.

The class $\mathbf{F}_\delta$ contains vector fields in $[L^p + L^\infty]^d$ , $p>d$ (by H\"{o}lder's inequality) and in $[L^d + L^\infty]^d$ (by Sobolev's inequality) with the relative bound $\delta$ that can be chosen arbitrarily small. The class $\mathbf{F}_\delta$ also contains vector fields having critical-order singularities, such as $b(x)=\sqrt{\delta}\frac{d-2}{2}|x|^{-2}x$ (by Hardy's inequality) or, more generally, vector fields in the weak $L^d$ class (by Strichartz' inequality \cite{KPS}), the Campanato-Morrey class or the Chang-Wilson-Wolff class \cite{CWW}, with $\delta$ depending on the respective norm of the vector field in these classes. It is clear that $b_1 \in \mathbf{F}_{\delta_1}$, $b_2 \in \mathbf{F}_{\delta_2}$ $\Rightarrow$ $b_1+b_2 \in \mathbf{F}_\delta$, $\sqrt{\delta}=\sqrt{\delta_1}+\sqrt{\delta_2}$. We refer to \cite{KiS} for a more detailed discussion on the class $\mathbf{F}_\delta$.

2) $a:=\sigma \sigma^{\intercal} \geq \nu I$, $\nu>0$, and
\begin{equation*}
\nabla_r a_{\cdot \ell}\in \mathbf{F}_{\gamma_{r\ell}} \quad ( 1 \leq r,\ell \leq d)
\end{equation*}
for some $\gamma_{r\ell}>0$. 

By 1), a matrix $a$ with entries in $W^{1,d}$ satisfies 2) with $\gamma_{r\ell}$ that can be chosen arbitrarily small.
The model example of a matrix $a$ satisfying 2) and having a critical discontinuity is $$a(x)=I+c\frac{x\otimes x}{|x|^2}, \quad c>-1.$$
Another example is $$a(x)=I+c (\sin \log(|x|))^2 e \otimes e, \quad e \in \mathbb R^d, |e|=1,$$ or, more generally, a sum of these two matrices with their points of discontinuity constituting e.g.\,a dense subset of $\mathbb R^d$. 

The problem of existence of a (unique in law) weak solution to the It\^{o} SDE  \eqref{sde1} with a locally unbounded general $b$ (i.e.\,not necessarily differentiable, radial or having other additional structure) is of fundamental importance, and has been thoroughly studied in the literature. 
The first principal result is due to N.\,I.\,Portenko \cite{P}: if $a$ is H\"{o}lder continuous and $b \in [L^p+L^\infty]^d$, $p>d$, then there exists a unique in law weak solution to \eqref{sde1}.
This result has been strengthened in the case $a=I$ in \cite{BC} for $b$ in the Kato class $\mathbf{K}^{d+1}_0$, and in \cite{KiS4} for $b$ is in the class of weakly form-bounded vector fields $\mathbf{F}_\delta^{\scriptscriptstyle 1/2}$  (see remark below concerning the uniqueness). (The class $\mathbf{F}_\delta^{\scriptscriptstyle 1/2}=\{|b| \in L^1_{\loc}: \||b|^{\frac{1}{2}}(\lambda-\Delta)^{-\frac{1}{4}}\|_{2 \rightarrow 2} \leq \delta\}$ contains both the Kato class $\mathbf{K}^{d+1}_\delta=\{|b| \in L^1_{\loc}: \||b|(\lambda-\Delta)^{-\frac{1}{2}}\|_{1 \rightarrow 1}\leq \delta\}$ and $\mathbf{F}_\delta$ as proper subclasses. It also contains the sums of the vector fields in these two classes.) Since already $\mathbf{K}^{d+1}_0:=\cap_{\delta>0}\mathbf{K}^{d+1}_\delta$ contains, for every $\varepsilon>0$, vector fields $b \not\in L^{1+\varepsilon}_{\loc}$, one can not appeal to the Girsanov transform in order to construct a weak solution of \eqref{sde1}. We note that $\mathbf{K}^{d+1}_0 - \mathbf{F}_\delta \neq \varnothing$, $\mathbf{F}_\delta - \mathbf{K}^{d+1}_{\delta_1} \neq \varnothing$ (in fact, already $[L^d + L^\infty]^d \not\subset \mathbf{K}^{d+1}_{\delta_1}$).

In Theorems \ref{mainthm} and \ref{mainthm2} below we prove that, under appropriate assumptions on relative bounds $\delta$ and $\gamma_{r\ell}$ ($1 \leq r,\ell \leq d$), the SDEs \eqref{sde1} and \eqref{sde2} have weak solutions, for every $x \in \mathbb R^d$, which determine a Feller semigroup on $C_\infty:=\{g \in C(\mathbb R^d): \lim_{x \rightarrow \infty}g(x)=0\}$ (with the $\sup$-norm). The latter is, in fact, the starting object in our approach.

The dependence of the solvability of \eqref{sde1}, \eqref{sde2} on the values 
of relative bounds has fundamental nature. For example, consider the vector field ($d \geq 3$) $$b(x):= \sqrt{\delta} \frac{d-2}{2} |x|^{-2} x \in \mathbf{F}_\delta.$$
If $\sqrt{\delta}<1 \wedge \frac{2}{d-2}$, then by Theorem \ref{mainthm} below the SDE
\[
X(t) =  - \int_0^t b(X(s))ds + \sqrt{2} W(t), \quad t \geq 0.
\]
has a weak solution.
If $\sqrt{\delta} \geq \frac{2d}{d-2}$, then an elementary argument shows that the equation does not have a weak solution, cf.\,\cite[Example 1]{KiS4}. In this sense, Theorem \ref{mainthm} covers critical-order singularities of $b$.

The central analytic object in our approach is $\Lambda_q(a,b)$, an operator realization  of the formal operator $-\nabla \cdot a \cdot \nabla + b \cdot \nabla$  in $L^q$ (we write $\Lambda_q(a,b) \supset -\nabla \cdot a \cdot \nabla + b \cdot \nabla$), an associated with it Feller semigroup on $C_\infty$ and the $W^{1,p}$ estimates on solutions of the corresponding elliptic equation. By 2), the vector field $\nabla a$ defined by $(\nabla a)^k := \sum_{i=1}^d (\nabla_i a_{ik})$ is in the class $\mathbf{F}_{\delta_a}$ with $\delta_a \leq \gamma:=\sum_{r,\ell=1}^d \gamma_{r\ell}$. Thus, $\Lambda(a,\nabla a +b) \supset -a \cdot \nabla^2 + b \cdot \nabla$ is well defined. We will show that the probability measures determined by the Feller semigroup associated to $\Lambda(a,\nabla a +b)$ admit description as weak solutions to \eqref{sde1}. 
(Since we only require that $\nabla a + b$ is in $\mathbf{F}_\delta$, we can handle diffusion matrices  having critical discontinuities; on the other hand, if we would require more, e.g.\,$\nabla_r a_{i \ell} \in L^p + L^\infty$ for some $p>d$, then by the Sobolev Embedding Theorem $a$ would be H\"{o}lder continuous, and we would end up in the assumptions of \cite{P}.)

We note that the results concerning \eqref{sde1} that impose various conditions on the derivatives of $a_{k\ell}$ already appeared in the literature, see e.g.\,\cite{ZZ}, see also references therein.

The assumptions 1), 2) destroy the two-sided Gaussian bounds on the heat kernel of $-\nabla \cdot a \nabla + b \cdot \nabla$, $-a \cdot \nabla^2 + b \cdot \nabla$  (this is already apparent if $a=I$, $b(x)=\pm\frac{d-2}{2}\sqrt{\delta}|x|^{-2}x$).

Concerning the Stratonovich SDE \eqref{sde2}, 
instead of 2) we require:

\smallskip

2') 
$
\nabla_r \sigma_{\cdot j} \in \mathbf{F}_{\delta_{rj}}$ for some $\delta_{rj}>0$ ($1 \leq r,j \leq d).
$

\smallskip

\noindent We re-write \eqref{sde2} as
\begin{equation}
\label{sde3}
\tag{$S'$}
X(t)=x - \int_0^t b(X(s))ds + \int_0^t c(X(s))ds + \sqrt{2} \int_0^t \sigma(X(s))dW(s), \qquad x \in \mathbb R^d,
\end{equation}
where
\begin{equation}
\label{c}
c:=(c^i)_{i=1}^d, \quad c^i(x):=\frac{1}{\sqrt{2}}\sum_{r,j=1}^d  (\nabla_r \sigma_{ij})\sigma_{rj}.
\end{equation}
Then, by 2'), $$c \in \mathbf{F}_{\delta_c}, \quad \delta_c \leq \frac{1}{2}\|\sigma\|^2_\infty\sum_{r,j=1}^d \delta_{rj}$$ (here $\|\sigma\|_\infty=\|(\sum_{r,j=1}^d \sigma_{rj}^2)^{\frac{1}{2}}\|_\infty$). We note that 2') yields 2). Indeed, 
$$\nabla_r a_{\cdot \ell}\in \mathbf{F}_{\gamma_{r\ell}}, \quad \gamma_{r\ell}\leq \big[\|\sigma_{\cdot\ell}\|_\infty(\sum_{j=1}^d\delta_{rj})^{\frac{1}{2}} + \|\sigma\|_\infty \delta_{r\ell}^{\frac{1}{2}}\big]^2.$$
Thus, we put \eqref{sde2} in the It\^{o} form, however, without losing the class of singularities of the drift or the class of discontinuities of the dispersion matrix. From the analytic point of view, imposing conditions on $\nabla_r \sigma_{ij}$ seems to be pertinent to the subject matter since it provides an operator behind \eqref{sde2}.

We prove that the weak solution to \eqref{sde1} or \eqref{sde2} is unique among all weak solutions that can be constructed using reasonable approximations of $a$, $b$, i.e.\,the ones that keep the values of relative bounds intact, see remark \ref{unique_rem} below.
We do not prove the uniqueness is law. (In this regard, we note that, under the assumptions 1), 2), in general $|\nabla u| \not \in L^\infty$, $u=(\mu + \Lambda_q(a,\nabla a + b))^{-1}f$, even if $f \in C_c^\infty$.)
However, in our construction the weak solutions to \eqref{sde1}, \eqref{sde2} are determined from the very beginning by a Feller semigroup, and so the associated process is strong Markov. The lack of the uniqueness in law, arguably, does not have decisive importance for completeness of the result.

\medskip

\medskip

\textbf{2.~}The following analytic results are crucial for what follows. Without loss of generality, we assume from now on that $a \geq I$. 

 Let $a$, $b$ satisfy conditions 1), 2). Assume that the relative bounds $\delta$, $\gamma$, $\delta_a$ satisfy, for some $q > 2 \vee (d-2)$,
\begin{equation}
\label{cond0}
\left\{
\begin{array}{l}
1-\frac{q}{4}(\sqrt{\gamma} + \|a-I\|_\infty \sqrt{\delta}) > 0, \\
(q-1)\big(1-\frac{q\sqrt{\gamma}}{2}\big) - (\sqrt{\delta}\sqrt{\delta_a} + \delta)\frac{q^2}{4} - (q-2)\frac{q\sqrt{\delta}}{2} -\|a-I\|_\infty \frac{q\sqrt{\delta}}{2}>0.
\end{array}
\right.
\end{equation}
(For example, \eqref{cond0} is evidently satisfied for all $\delta$, $\gamma$, $\delta_a$ sufficiently small. If $\gamma=0$, then \eqref{cond0} reduces to $\delta<1 \wedge (\frac{2}{d-2})^2$.)
Then, by  \cite[Theorem 2]{KiS3}, there exists an operator realization $\Lambda_q(a,b)$ of the  formal differential operator $-\nabla \cdot a \cdot \nabla + b \cdot \nabla$ in $L^q$  as the (minus) generator of a positivity preserving $L^\infty$ contraction quasi contraction $C_0$ semigroup $e^{-t\Lambda_q(a,b)}$,
\begin{equation}
\label{conv1}
e^{-t\Lambda_q(a,b)}:=s{\mbox-}L^q{\mbox-}\lim e^{-t\Lambda_q(a_n,b_n)} \quad (\text{loc.\,uniformly in $t \geq 0$}),
\end{equation}
where $\Lambda_q(a_n,b_n):=-\nabla \cdot a_n \cdot \nabla + b_n \cdot \nabla$, $D(\Lambda_q(a_n,b_n))=W^{2,p}$,
$b_n := e^{\varepsilon_n\Delta} (\mathbf 1_n b)$, $\mathbf 1_n$ is the indicator of  $\{x \in \mathbb R^d \mid  \; |x| \leq n,  |b(x)| \leq n \}$, $\varepsilon_n \downarrow 0$, 
$
a_n:=I+e^{\epsilon_n\Delta}\big(\eta_n (a-I)\big),
$   
$$
\eta_n(x):=\left\{
\begin{array}{ll}
1, & \text{ if } |x|< n, \\
n+1-|x|, & \text{ if } n \leq |x| \leq n+1, \qquad (x \in \mathbb R^d), \qquad \epsilon_n \downarrow 0,\\
0, & \text{ if } |x|>n+1,
\end{array}
\right.
$$
(see remark \ref{approx_rem} below), such that for $u:=(\mu+\Lambda_q(a,b))^{-1}f$, $\mu>\mu_0$, $f \in L^q$,
\begin{equation}
\tag{$\star
$}
\label{reg_star}
\begin{array}{c}
\|\nabla u \|_{q} \leq  K_1(\mu-\mu_0)^{-\frac{1}{2}} \|f\|_q, \\[2mm]
\|\nabla u \|_{\frac{qd}{d-2}} \leq  K_2 (\mu-\mu_0)^{\frac{1}{q}-\frac{1}{2}} \|f\|_q
\end{array}
\end{equation}
where the constants $\mu_0>0$ and $K_i<\infty$ ($i=1,2$) depend only on $d,q,c,\delta,\gamma$. By \eqref{reg_star} and the Sobolev Embedding Theorem, $u \in C^{0,\alpha}$, $\alpha=1-\frac{d-2}{p}$. (See remark \ref{proof_rem} below.)

The second estimate in \eqref{reg_star} allows us to run an iteration procedure $L^p \rightarrow L^\infty$, which, combined with  \eqref{conv1}, allows to construct a positivity preserving contraction $C_0$ semigroup on $C_\infty$ (Feller semigroup) by the formula
\begin{equation}
\label{conv_c0}
e^{-t\Lambda_{C_\infty}(a,b)}:=s{\mbox-}C_\infty{\mbox-}\lim e^{-t\Lambda_{C_\infty}(a_n,b_n)} \quad (\text{loc.\,uniformly in $t \geq 0$}),
\end{equation}
where $\Lambda_{C_\infty}(a_n,b_n):=-\nabla \cdot a_n \cdot \nabla + b_n \cdot \nabla$, $D(\Lambda_{C_\infty}(a_n,b_n)):=(1-\Delta)^{-1}C_\infty$ \cite[Theorem 3]{KiS3}. 
(The reason we first work in $L^q$, and not directly in $C_\infty$, is simple: $L^q$ has a (locally) weaker topology, so it is much easier to prove convergence there.)

By \eqref{conv1} and \eqref{conv_c0},
\begin{equation}
\label{consist0}
(\mu+\Lambda_{C_\infty}(a,b))^{-1}=\bigl((\mu+\Lambda_{q}(a,b))^{-1} \upharpoonright L^q \cap C_\infty \bigr)^{\clos}_{C_\infty \rightarrow C_\infty}, \quad \mu > \mu_0.
\end{equation}
In view of \eqref{consist0}, \eqref{reg_star} and the Sobolev Embedding Theorem,
\begin{equation}
\label{SF0}
 \bigl(e^{-t\Lambda_{C_\infty}(a,b)} \upharpoonright L^q \cap C_\infty \bigr)^{\cl}_{L^q \rightarrow C_\infty} \in \mathcal B(L^q,C_\infty), \qquad \; t>0.
\end{equation}

\begin{remark}
It is clear that $C_c^\infty \not\subset D(\Lambda_{C_\infty}(I,b))$ for $b \in [L^\infty]^d - [C_b]^d$.
In fact, an attempt to find a complete description of $D(\Lambda_{C_\infty}(a,b))$ in the elementary terms for a general $b \in \mathbf{F}_\delta$, even if $a=I$, is rather hopeless.
\end{remark}

\begin{remark}
\label{approx_rem}
Since our assumptions on $\delta$, $\gamma$ and $\delta_a$ involve only strict inequalities, we can and will choose $\epsilon_n, \varepsilon_n \downarrow 0$ in the definition of $a_n$, $b_n$ so that 
$$
\nabla_r (a_n)_{\cdot \ell}\in \mathbf{F}_{\tilde{\gamma}_{r\ell}} \;\;(1 \leq r,\ell \leq d), \quad
\nabla a_n \in \mathbf{F}_{\tilde{\delta}_a}, \quad b_n \in \mathbf{F}_{\tilde{\delta}} 
$$
with relative bounds $\tilde{\delta}$, $\tilde{\gamma}_{rk}$, $\tilde{\delta_a}$ satisfying \eqref{cond0}, and with $\lambda \neq \lambda(n)$.

In what follows, without loss of generality, $\tilde{\delta}=\delta$, $\tilde{\gamma}=\gamma$, $\tilde{\delta}_a=\delta_a$.
\end{remark}

\medskip

\textbf{3.~}We now state the main results of the paper. We consider first the It\^{o} SDE \eqref{sde1}.
The corresponding analytic object is $\Lambda_q(a,\nabla a + b)$, an operator realization of $-a \cdot \nabla^2 + b \cdot \nabla$ in $L^q$, see the previous section, where we assume that the condition \eqref{cond0} is satisfied with $\delta$ replaced by $\delta_a + \delta$.
Then $\nabla a_n + b_n \in \mathbf{F}_{\delta_a+\delta}$ with $\lambda \neq \lambda(n)$, and the limit
\begin{equation}
\label{conv_c}
e^{-t\Lambda_{C_\infty}(a,\nabla a + b)}:=s{\mbox-}C_\infty{\mbox-}\lim e^{-t\Lambda_{C_\infty}(a_n,\nabla a_n + b_n)} \quad (\text{loc.\,uniformly in $t \geq 0$}),
\end{equation}
where $\Lambda_{C_\infty}(a_n,\nabla a_n + b_n):=-a_n \cdot \nabla^2 + b_n \cdot \nabla$, $D(\Lambda_{C_\infty}(a_n,\nabla a_n + b_n)):=(1-\Delta)^{-1}C_\infty$, exists and determines Feller semigroup on $C_\infty$. 
By \eqref{conv_c0}, \eqref{consist0}, 
\begin{equation}
\label{consist}
(\mu+\Lambda_{C_\infty}(a,\nabla a + b))^{-1}=\bigl((\mu+\Lambda_{q}(a,\nabla a + b))^{-1} \upharpoonright L^q \cap C_\infty \bigr)^{\clos}_{C_\infty \rightarrow C_\infty}, \quad \mu > \mu_0.
\end{equation}
\begin{equation}
\label{SF}
 \bigl(e^{-t\Lambda_{C_\infty}(a,\nabla a + b)} \upharpoonright L^q \cap C_\infty \bigr)^{\cl}_{L^q \rightarrow C_\infty} \in \mathcal B(L^q,C_\infty), \qquad \; t>0.
\end{equation}
Denote: $\bar{\mathbb{R}}^d:=\mathbb R^d \cup \{\infty\}$ is the one-point compactification of $\mathbb R^d$.

$\bar{\Omega}_D:=D([0,\infty[,\bar{\mathbb R}^d)$ the set of all right-continuous functions $X:[0,\infty[ \rightarrow 
\bar{\mathbb R}^d$ having the left limits,  such that $X(t)=\infty$, $t>s$, whenever $X(s)=\infty$ or $X(s-)=\infty$.

$\mathcal F_t \equiv \sigma\{X(s) \mid 0 \leq s\leq t, X \in \bar{\Omega}_D\}$ the minimal $\sigma$-algebra containing all cylindrical sets
$\{X \in \bar{\Omega}_D \mid \bigl(X(s_1),\dots,X(s_n)\bigr) \in A, A \subset (\bar{\mathbb R}^{d})^n \text{ is open}\}_{0 \leq s_1 \leq \dots \leq s_n \leq t}$.

$\Omega:=C([0,\infty[,\mathbb R^d)$ denotes the set of all continuous functions $X:[0,\infty[ \rightarrow \mathbb R^d$.

$\mathcal G_t:=\sigma\{X(s) \mid  0 \leq s \leq t, X \in \Omega\}$, $\mathcal G_\infty:=\sigma\{X(s) \mid  0 \leq s < \infty, X \in \Omega\}$.

\smallskip

By the classical result, for a given Feller semigroup $T^t$ on $C_\infty(\mathbb R^d)$, there exist probability measures $\{\mathbb P_x\}_{x \in \mathbb R^d}$ on $\mathcal F_\infty \equiv \sigma\{X(s) \mid 0 \leq s < \infty, X \in \bar{\Omega}_D\}$ such that $(\bar{\Omega}_D, \mathcal F_t, \mathcal F_\infty, \mathbb P_x)$ is a Markov process and
$$
\mathbb E_{\mathbb P_x}[f(X(t))]=T^t f(x), \quad X \in \bar{\Omega}_D, \quad f \in C_\infty, \quad x \in \mathbb R^d.
$$

\begin{theorem}[It\^{o} SDE]
\label{mainthm}
Let $d \geq 3$. Assume that $b \in \mathbf{F}_\delta$, $\nabla_r a_{\cdot \ell} \in \mathbf{F}_{\gamma_{r\ell}}$ and $\nabla a \in \mathbf{F}_{\delta_a}$, with $\gamma:=\sum_{r,\ell=1}^d \gamma_{r\ell}$,  $\delta$, $\delta_a$ satisfying, for some $q>2 \vee (d-2)$, the condition \eqref{cond0}
with $\delta$ replaced by $\delta + \delta_a$.
Let $(\bar{\Omega}_D, \mathcal F_t, \mathcal F_\infty, \mathbb P_x)$ be the Feller process
determined by $T^t=e^{-t\Lambda_{C_\infty}(a,\nabla a + b)}$. 
The following is true for every $x \in \mathbb R^d$:

\smallskip

{\rm(\textit{i})} The trajectories of the process are $\mathbb P_x$ a.s.\,finite and continuous on $0 \leq t <\infty$.

\smallskip

We denote $\mathbb P_x\upharpoonright (\Omega,\mathcal G_\infty)$ again by $\mathbb P_x$.

\smallskip

{\rm(\textit{ii})}  $\mathbb E_{\mathbb P_x}\int_0^t |b(X(s))|ds<\infty$, $X \in \Omega$. 

\smallskip

{\rm(\textit{iii})} For any selection of $f \in C_c^\infty$, $f(y):=y_i$, or $f(y):=y_i y_j$, $1 \leq i,j \leq d$, the process 
$$
M^f(t):=f(X(t)) - f(x) + \int_0^t (-a \cdot \nabla^2 f + b\cdot\nabla f)(X(s))ds, \quad t>0,
$$
is a continuous martingale relative to $(\Omega,\mathcal G_t, \mathbb P_x)$; the latter thus determines a weak solution to the SDE \eqref{sde1} on an extension of $(\Omega,\mathcal G_t, \mathbb P_x)$. 
\end{theorem}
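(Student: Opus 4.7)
The plan is to realize the Feller measures $\mathbb{P}_x$ attached to $T^t=e^{-t\Lambda_{C_\infty}(a,\nabla a+b)}$ as weak limits on path space of the laws $\mathbb{P}_x^n$ of the smooth approximating strong solutions $X_n$ to
$$
X_n(t)=x-\int_0^t b_n(X_n(s))\,ds+\sqrt{2}\int_0^t\sigma_n(X_n(s))\,dW(s),
$$
whose transition semigroup is precisely $T_n^t=e^{-t\Lambda_{C_\infty}(a_n,\nabla a_n+b_n)}$ (classical existence applies to $X_n$ since $a_n,b_n,\sigma_n$ are smooth and bounded). By \eqref{conv_c} the finite-dimensional marginals of $\mathbb{P}_x^n$ converge to those of $\mathbb{P}_x$, and by remark \ref{approx_rem} the constants in \eqref{reg_star} can be taken $n$-independent. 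Properties (\textit{i})--(\textit{iii}) will be read off from the passage $n\rightarrow\infty$.

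For (\textit{i}), the key is tightness of $\{\mathbb{P}_x^n\}$ in $C([0,\infty[,\mathbb{R}^d)$. The modulus of continuity of the $X_n$ is controlled by a Kolmogorov-type moment estimate
$$
\mathbb{E}_{\mathbb{P}_x^n}\bigl[|X_n(t)-X_n(s)|^{p}\bigr]\le C|t-s|^{\alpha p}
$$
with $C,\alpha$ independent of $n$, derived from It\^o's formula together with the $n$-uniform H\"older regularity of the resolvent $u_n=(\mu+\Lambda_q(a_n,\nabla a_n+b_n))^{-1}f$ coming from the second line of \eqref{reg_star} and the Sobolev embedding. Non-explosion reduces to $\sup_n\mathbb{P}_x^n(\sup_{s\le T}|X_n(s)|>R)\rightarrow 0$ as $R\rightarrow\infty$, and is obtained by combining conservativity of $\mathbb{P}_x^n$ (classical, since $a_n,b_n$ are smooth and bounded) with the convergence $T_n^t\eta_R\rightarrow T^t\eta_R$ for $\eta_R\in C_c^\infty$, sending $R\rightarrow\infty$, together with a maximal inequality for the $X_n$ based on the same resolvent estimates.

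For (\textit{ii}), write $\mathbb{E}_{\mathbb{P}_x}\int_0^t |b|(X(s))\,ds=\int_0^t T^s|b|(x)\,ds$ and split $|b|=|b|\mathbf{1}_{B_R}+|b|\mathbf{1}_{B_R^c}$: the compactly supported piece lies in $L^q$, and \eqref{SF} makes $s\mapsto T^s(|b|\mathbf{1}_{B_R})(x)$ integrable; the tail is handled via form-boundedness of $b$, which produces a Kato-type control of $\int_0^\infty e^{-\mu s}T_n^s|b_n|(x)\,ds$ uniformly in $n$, transferred to $T^s$ by \eqref{conv1}. For (\textit{iii}) with $f\in C_c^\infty$, It\^o's formula applied to $X_n$ gives the $\mathbb{P}_x^n$-martingale
$$
M_n^f(t)=f(X_n(t))-f(x)+\int_0^t(-a_n\cdot\nabla^2 f+b_n\cdot\nabla f)(X_n(s))\,ds,
$$
and one passes $n\rightarrow\infty$. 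The main obstacle is this passage, since $(-a\cdot\nabla^2+b\cdot\nabla)f$ is only in $L^q_{\loc}$: one must upgrade $L^2_{\loc}$-convergence of $a_n,b_n$ to convergence of the $ds$-integrals in $L^1(\mathbb{P}_x^n)$, which I would do by combining uniform occupation-time bounds on $X_n$ coming from \eqref{SF} with (\textit{ii}) and its approximate analogue used as a fixed majorant. The extension to $f(y)=y_i$ and $f(y)=y_iy_j$ is obtained by multiplying by $\zeta_R(y)=\zeta(|y|/R)$ and invoking non-explosion from (\textit{i}) to let $R\rightarrow\infty$.

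Finally, with (\textit{iii}) available for linear and quadratic $f$, polarization yields $\langle M^{y_i},M^{y_j}\rangle_t=2\int_0^t a_{ij}(X(s))\,ds$; since $a=\sigma\sigma^{\intercal}\ge\nu I$, the standard representation theorem for continuous martingales furnishes, on an enlargement of $(\Omega,\mathcal{G}_t,\mathbb{P}_x)$, a $d$-dimensional Brownian motion $W$ with $M^{y_i}(t)=\sqrt{2}\int_0^t\sigma_{ij}(X(s))\,dW^j(s)$, which together with the definition of $M^{y_i}$ is precisely \eqref{sde1}.
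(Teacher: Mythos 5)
Your route is genuinely different from the paper's, and it contains gaps that are not merely technical. You propose to obtain $\mathbb P_x$ as a weak limit of the laws $\mathbb P^n_x$ on $C([0,\infty[,\mathbb R^d)$, with tightness coming from a Kolmogorov moment bound $\mathbb E_{\mathbb P^n_x}[|X_n(t)-X_n(s)|^{p}]\le C|t-s|^{\alpha p}$ uniform in $n$, "derived from It\^o's formula and the H\"older regularity of the resolvent" in \eqref{reg_star}. This step is unsubstantiated and is precisely what the hypotheses do not give you: spatial H\"older continuity of $(\mu+\Lambda_q)^{-1}f$ does not control $p$-th moments of increments; to handle the drift part $\int_s^t b_n(X_n(r))\,dr$ you would need Khasminskii-type bounds on $\mathbb E^n_x\int_s^t|b_n|(X_n(r))\,dr$ that are uniform in $n$ \emph{and} in $x$ and small as $t-s\downarrow 0$, i.e.\ essentially a Kato-class condition on $b$. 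Form-boundedness does not imply this (the paper stresses $\mathbf F_\delta\not\subset\mathbf K^{d+1}_{\delta_1}$, and the Gaussian heat-kernel bounds are destroyed). The paper's construction is designed to avoid tightness altogether: $\mathbb P_x$ exists from the outset on $\bar\Omega_D$ via the Feller semigroup, non-explosion is proved from the weighted resolvent bound \eqref{j_1_w} applied to $\xi_r(1-\xi_k)$, and path continuity is deduced \emph{afterwards} from the martingale property for $g\in C_c^\infty$ by the no-jump argument of Lemma \ref{cont_prop} — no modulus-of-continuity estimate for $X_n$ is ever needed.

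Two further steps are asserted rather than proved. First, in (\textit{ii}) you claim that form-boundedness "produces a Kato-type control of $\int_0^\infty e^{-\mu s}T_n^s|b_n|(x)\,ds$ uniformly in $n$"; this is false as stated: the available bound \eqref{j_2} controls $(\mu+\Lambda_{C_\infty})^{-1}(|b_m|h)$ only for compactly supported $h$ (its right-hand side is $\||b_m|^{2/q}h\|_q$, which is infinite for $h\equiv 1$). The paper's substitute is the weighted estimates \eqref{j_1_w}, \eqref{j_2_w} of Lemma A2, with the weight $\rho$ absorbing both the lack of global integrability of $|b|^2$ and the growth of the test functions $y_i$, $y_iy_j$; your cutoff-plus-non-explosion argument for the linear and quadratic $f$ needs exactly these bounds to dominate the truncation errors (non-explosion alone does not give $\mathbb E_x\int_0^t|b\cdot\nabla f|(X(s))\,ds<\infty$ for quadratic $f$). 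Second, in (\textit{iii}) the limit passage in the drift integral — which you correctly identify as the main obstacle — is left to "uniform occupation-time bounds", which is not an argument. The paper's mechanism is concrete: the bound \eqref{rem_j3} for $|b_n-b_m|$ lets one replace $b_n$ by a fixed $b_m$ with an error small uniformly in $n$, after which the $C_\infty$-convergence \eqref{conv_c} of the semigroups finishes the job (step $(\mathbf c)$ of Lemma \ref{Y_prop}); in addition, the a.s.\ convergences $b_n(X(t))\to b(X(t))$, $a_n(X(t))\to a(X(t))$ rest on the absolute continuity of the transition probabilities supplied by \eqref{SF}. Your closing step (quadratic variation and martingale representation, yielding the weak solution on an enlargement) is standard and fine, but as written the proposal does not amount to a proof of (\textit{i})--(\textit{iii}).
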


See remark \ref{unique_rem} below concerning the uniqueness.

\begin{theorem}[Stratonovich SDE]
\label{mainthm2}
Let $d \geq 3$. Assume that $b \in \mathbf{F}_\delta$, $\nabla_r \sigma_{\cdot j} \in \mathbf{F}_{\delta_{rj}}$ and $\nabla a \in \mathbf{F}_{\delta_a}$, with $\gamma:=\sum_{r,\ell=1}^d \gamma_{r\ell}$, $\delta$, $\delta_a$, $\delta_c$ satisfying, for some $q>2 \vee (d-2)$, the condition \eqref{cond0} with $\delta$ replaced by $\delta + \delta_a + \delta_c$.
Let $(\bar{\Omega}_D, \mathcal F_t, \mathcal F_\infty, \mathbb P_x)$ be the Feller process
determined by $T^t:=e^{-t\Lambda_{C_\infty}(a,\nabla a - c + b)}$. 
The following is true for every $x \in \mathbb R^d$:

\smallskip

{\rm(\textit{i})} The trajectories of the process are $\mathbb P_x$ a.s.\,finite and continuous on $0 \leq t <\infty$.

\smallskip

We denote $\mathbb P_x\upharpoonright (\Omega,\mathcal G_\infty)$ again by $\mathbb P_x$.

\smallskip

{\rm(\textit{ii})}  $\mathbb E_{\mathbb P_x}\int_0^t |b(X(s))|ds<\infty$, $X \in \Omega$. 

\smallskip

{\rm(\textit{iii})} For any selection of $f \in C_c^\infty$, $f(y):=y_i$, or $f(y):=y_i y_j$, $1 \leq i,j \leq d$, the process 
$$
M^f(t):=f(X(t)) - f(x) + \int_0^t (-a \cdot \nabla^2 f + (b - c) \cdot\nabla f)(X(s))ds, \quad t>0,
$$
is a continuous martingale relative to $(\Omega,\mathcal G_t, \mathbb P_x)$; the latter thus determines a weak solution to \eqref{sde3} on an extension of $(\Omega,\mathcal G_t, \mathbb P_x)$.

\end{theorem}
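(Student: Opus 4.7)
The starting observation will be that \eqref{sde2} is by definition equivalent to the It\^o SDE \eqref{sde3} with modified drift $b-c$, and the hypothesis 2') already gives $c\in\mathbf F_{\delta_c}$ (as derived in the text), hence $b-c\in\mathbf F_{\tilde\delta}$ with $\sqrt{\tilde\delta}\le\sqrt\delta+\sqrt{\delta_c}$; in particular $\nabla a + (b-c)$ is form-bounded with bound controlled by $\delta+\delta_a+\delta_c$. Consequently the hypothesis that \eqref{cond0} holds with $\delta$ replaced by $\delta+\delta_a+\delta_c$ places us in the setting of Section~2 with $b$ replaced throughout by $b-c$, so that $\Lambda_{C_\infty}(a,\nabla a-c+b)\supset -a\cdot\nabla^2+(b-c)\cdot\nabla$ generates the Feller semigroup $T^t$ in question, together with the consistency \eqref{consist} and the $L^q\to C_\infty$ smoothing \eqref{SF} applied to this operator; the Feller process $(\bar\Omega_D,\mathcal F_t,\mathcal F_\infty,\mathbb P_x)$ is then produced by the classical association. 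The entire statement of Theorem \ref{mainthm2} will thus reduce to verifying (i)--(iii) for the process determined by this modified Feller semigroup, and the structure will parallel that of Theorem \ref{mainthm}.

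The proof of (i)--(iii) will proceed through approximation. Using Remark \ref{approx_rem}, I would choose smooth $a_n,b_n,\sigma_n$ so that the form bounds $\delta,\delta_a,\delta_{rj}$ (and hence the bound on $\delta_c$ arising via $c_n:=\tfrac{1}{\sqrt 2}\sum_{r,j}(\nabla_r\sigma_{n,ij})\sigma_{n,rj}$) are preserved with the same $\lambda$. The classical theory then gives unique strong solutions $X_n$ to the approximating \eqref{sde3}, with $\mathbb E_{\mathbb P_x^n}[f(X_n(t))]=e^{-t\Lambda_{C_\infty}(a_n,\nabla a_n-c_n+b_n)}f(x)$. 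By \eqref{conv_c} applied to the modified operator, the approximating semigroups converge in $C_\infty$, giving convergence of finite-dimensional distributions; combined with a tightness estimate on $\bar\Omega_D$ obtained from the uniform resolvent bounds \eqref{reg_star}, this yields $\mathbb P_x^n\Rightarrow\mathbb P_x$. Assertion (i) will follow from this weak convergence together with a non-explosion bound obtained by testing the approximating resolvents against a suitable Lyapunov function. For (ii), I would invoke a Khasminskii-type estimate giving $\mathbb E_{\mathbb P_x}\int_0^t|b(X(s))|\,ds \le C(t)\|(\mu+\Lambda_q(a,\nabla a-c+b))^{-1}|b|\|_{C_\infty}<\infty$, where the finiteness follows from \eqref{reg_star} and the Sobolev embedding applied to $|b|\in L^2_{\loc}$.

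For (iii), the approximating $X_n$ trivially satisfy $M^f_n(t)=f(X_n(t))-f(x)+\int_0^t(-a_n\cdot\nabla^2 f+(b_n-c_n)\cdot\nabla f)(X_n(s))\,ds$ as a continuous martingale. For $f\in C_c^\infty$ one passes to the limit via the Feller convergence and the uniform estimates. For the unbounded coordinate test functions $f(y)=y_i$ and $f(y)=y_iy_j$, I would localize via the stopping times $\tau_R:=\inf\{t:|X(t)|\ge R\}$, treat a truncation $f\chi_R\in C_c^\infty$, and let $R\to\infty$ using the path continuity from (i) and the integrability from (ii); the resulting solved martingale problem then yields, on a standard extension, the required weak solution of \eqref{sde3}. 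The hard part will be this limit passage: since $b-c$ is only form-bounded (not in $L^p_{\loc}$ for any $p>1$ in the borderline examples) and since $|\nabla u|\not\in L^\infty$ in general under our assumptions, the drift additive functional $\int_0^\cdot(b_n-c_n)(X_n(s))\,ds$ cannot be handled pathwise --- a uniform modulus of continuity must be extracted from the $L^q\to C_\infty$ smoothing \eqref{SF} and the resolvent bounds \eqref{reg_star} applied to carefully chosen test functions. This analytic core is inherited essentially unchanged from Theorem \ref{mainthm}, with only the straightforward replacement of $b$ by $b-c$ and the inclusion of the extra $\delta_c$ in all relative-bound estimates.
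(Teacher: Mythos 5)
Your reduction is exactly the paper's proof of Theorem \ref{mainthm2}: rewrite \eqref{sde2} as the It\^{o} equation \eqref{sde3}, note that 2') gives $c\in\mathbf F_{\delta_c}$ so that $\nabla a - c + b$ is form-bounded with bound controlled by $\delta+\delta_a+\delta_c$, fix the approximation $\sigma_n$ (hence $a_n$, $c_n$) preserving the relative bounds with $\lambda\neq\lambda(n)$, conclude that $e^{-t\Lambda_{C_\infty}(a,\nabla a - c + b)}$ exists with \eqref{conv_c}, \eqref{consist}, \eqref{SF}, and then invoke the proof of Theorem \ref{mainthm} with $b$ replaced by $b-c$. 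The auxiliary machinery you sketch for (i)--(iii) (tightness and weak convergence of $\mathbb P^n_x$, Lyapunov functions, a Khasminskii-type bound, stopping-time localization) is not what the paper actually uses --- there $\mathbb P_x$ is defined directly from the limiting Feller semigroup, non-explosion and drift integrability come from the weighted resolvent estimates \eqref{j_1_w}, \eqref{j_2_w} of Lemma A2, and the coordinate test functions are handled by the cutoffs $\xi_k$ rather than stopping times --- but since you ultimately defer this analytic core to Theorem \ref{mainthm} with the substitution $b\mapsto b-c$, your argument is in substance the same as the paper's.
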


We fix the following approximation of $\sigma$ by smooth matrices:
$
\sigma_n = I+e^{\epsilon_n\Delta}\big(\eta_n (\sigma-I)\big)
$
($\eta_n$ have been defined earlier).
Then we may assume (cf.\,remark \ref{approx_rem} above) that $a_n:=\sigma_n\sigma_n^t \geq 1$, $b_n$ and $c_n$ defined by \eqref{c} satisfy
$$
\nabla_r (a_n)_{\cdot \ell}\in \mathbf{F}_{\gamma_{r\ell}} \;\;(1 \leq r,\ell \leq d), \quad
\nabla a_n \in \mathbf{F}_{\delta_a}, \quad c_n \in \mathbf{F}_{\delta_c}, \quad \nabla a_n -c_n + b_n \in \mathbf{F}_{\delta_a + \delta_c+ \delta}
$$
with $\lambda \neq \lambda(n)$. If the condition \eqref{cond0} is satisfied with $\delta$ replaced by $\delta_a + \delta_c+ \delta$, then the Feller semigroup $e^{-t\Lambda_{C_\infty}(a,\nabla a - c + b)}$ is well defined, and the properties \eqref{conv_c}, \eqref{consist} and \eqref{SF} hold for $e^{-t\Lambda_{C_\infty}(a,\nabla a - c + b)}$.
Thus, Theorem \ref{mainthm2} is a  consequence of Theorem \ref{mainthm}.

\begin{remark}
\label{unique_rem}
In the assumptions of Theorem \ref{mainthm}, assume also that $\|a-I\|_\infty + \delta<1$.
If $\{\mathbb Q_x\}_{x \in \mathbb R^d}$ is another solution to the martingale problem of {\rm(}\textit{iii}{\rm)} such that
$$
\mathbb Q_x=w{\mbox-}\lim_n \mathbb P_x(\tilde{a}_n,\tilde{b}_n) \quad \text{for every $x \in \mathbb R^d$},
$$
where $\tilde{b}_n$, $\tilde{a}_n$ satisfy $1)$, $2)$ with relative bounds $\tilde{\delta}$, $\tilde{\gamma}_{rk}$, $\tilde{\gamma}_a$ fulfilling \eqref{cond0} with $\delta$ replaced by $\delta + \delta_a$, then $\{\mathbb Q_x\}_{x \in \mathbb R^d}=\{\mathbb P_x\}_{x \in \mathbb R^d}.$ See Appendix A for the proof.

The same remark applies to Theorem \ref{mainthm2} provided that $\|a-I\|_\infty + \delta+\delta_c<1$.
\end{remark}

The proof of Theorem \ref{mainthm} follows the approach in \cite{KiS4}. The latter requires a Feller semigroup, $e^{-t\Lambda_{C_\infty}(a,\nabla a + b)}$, and the estimates of Lemmas A1 and A2 below.

\begin{lemmaB}
\label{lemB}
Assume that the conditions of Theorem \ref{mainthm} are satisfied. 
There exist constants $\mu_0>0$ and $C_i=C_i(\delta,\gamma,\delta_a,q,\mu)$, $i=1,2$, such that, for all $h \in C_c$ and $\mu>\mu_0$, we have:
\begin{equation}
\label{j_2}
\bigl\|(\mu+\Lambda_{C_\infty}(a,\nabla a + b))^{-1}|b_m| h \bigr\|_{\infty} \leq C_1\||b_m|^{\frac{2}{q}}h\|_q,
\end{equation}
\begin{equation}
\label{rem_j3}
\|(\mu+\Lambda_{C_\infty}(a,\nabla a + b))^{-1}|b_m-b_n|h\|_\infty  \leq C_2 \bigl\| |b_m-b_n|^{\frac{2}{q}} h\bigr\|_q.
\end{equation}
\end{lemmaB}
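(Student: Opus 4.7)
The strategy combines the analytic framework of Section~2 (the consistency relation \eqref{consist} and the $L^q$-regularity \eqref{reg_star}) with a Moser-type energy iteration in which the form-boundedness of $b$ absorbs the extra $|b_m|$-factor on the right-hand side. By \eqref{consist} and the convergence \eqref{conv_c}, it suffices to prove both estimates for the regularized resolvent, i.e.\ for $u_n := (\mu+\Lambda_q(a_n,\nabla a_n+b_n))^{-1}(|b_m|h) \in W^{2,q}$, with constants independent of $n$, and then pass to the limit. Testing the equation $\mu u_n+\Lambda_n u_n=|b_m|h$ against $|u_n|^{q-2}u_n$ and integrating by parts (exactly as in the derivation of \eqref{reg_star} in \cite{KiS3}), condition \eqref{cond0} (with $\delta$ replaced by $\delta+\delta_a$) permits absorbing the drift and $\nabla a_n$-terms into the principal quadratic form, yielding
\[
\mu_0'\|u_n\|_q^q + c_0\|\nabla|u_n|^{q/2}\|_2^2 \le \int_{\mathbb R^d}|b_m|\,|h|\,|u_n|^{q-1}\,dx,
\]
with $\mu_0',c_0>0$ depending only on $\delta,\gamma,\delta_a,q,\mu$.

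To extract the desired $|b_m|^{2/q}$-factor, split the integrand as $(|b_m|^{2/q}|h|)\cdot(|b_m|^{(q-2)/q}|u_n|^{q-1})$ and apply H\"older with exponents $q$ and $q/(q-1)$:
\[
\int|b_m||h|\,|u_n|^{q-1}\,dx \le \||b_m|^{2/q}h\|_q \Bigl(\int|b_m|^{(q-2)/(q-1)}|u_n|^q\,dx\Bigr)^{(q-1)/q}.
\]
Since $(q-2)/(q-1)\in(0,1)$, the pointwise bound $|b_m|^{(q-2)/(q-1)}\le 1+|b_m|^2$ combined with the defining form-inequality $\int|b_m|^2 v^2\le\delta(\|\nabla v\|_2^2+\lambda_\delta\|v\|_2^2)$ of $\mathbf F_\delta$ (applied with $v=|u_n|^{q/2}$) gives $\int|b_m|^{(q-2)/(q-1)}|u_n|^q \le (1+\delta\lambda_\delta)\|u_n\|_q^q + \delta\|\nabla|u_n|^{q/2}\|_2^2$. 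Substituting back and applying Young's inequality (for $\mu$ large enough to dominate the constants) yields $\|u_n\|_q+\|\nabla|u_n|^{q/2}\|_2\le C\||b_m|^{2/q}h\|_q$, and the Sobolev embedding $W^{1,2}\hookrightarrow L^{2d/(d-2)}$ applied to $|u_n|^{q/2}$ upgrades this to $\|u_n\|_{qd/(d-2)}\le C\||b_m|^{2/q}h\|_q$. Bootstrap to $L^\infty$ by iterating the Moser step with test exponents $q_k=q(d/(d-2))^k$, at each stage exploiting the $L^{q_k}$ bound from the previous stage and the level-independent absorption supplied by the form-bound; after finitely many stages $q_k d/(d-2)>d$ and the Sobolev embedding $W^{1,q_kd/(d-2)}\hookrightarrow L^\infty$ combined with \eqref{reg_star} yields $\|u_n\|_\infty\le C\||b_m|^{2/q}h\|_q$, proving \eqref{j_2}. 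The estimate \eqref{rem_j3} follows by the same argument with $|b_m-b_n|$ in place of $|b_m|$, noting that $b_m-b_n\in\mathbf F_{4\delta}$ by the triangle inequality for form-bounded vector fields.

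The main technical obstacle is closing the bootstrap iteration while keeping the right-hand side anchored at $\||b_m|^{2/q}h\|_q$ with the \emph{fixed} exponent $q$: na\"ive rescaling of the Moser test would replace it by $\||b_m|^{2/q_k}h\|_{q_k}$ at level $k$, which is the wrong quantity. One must therefore retain the H\"older split at exponent $q$ at every step and dominate the resulting mismatched powers of $|u_n|$ via interpolation between the known $L^{q_k}$ bounds established along the way. The argument closes because the form-bound $\int|b_m|^2 v^2\le\delta(\|\nabla v\|_2^2+\lambda_\delta\|v\|_2^2)$ is independent of the iteration level, so the resulting constants depend only on $\delta,\gamma,\delta_a,q,\mu$ and on the finite number of iterations needed to reach $L^\infty$.
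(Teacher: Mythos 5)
Your route is genuinely different from the paper's: you test with $|u_n|^{q-2}u_n$ (function level) and try to reach $L^\infty$ by a Moser iteration, whereas the paper tests with $-\nabla\cdot(\eta\,\nabla u\,|\nabla u|^{q-2})$ (gradient level), derives a weighted bound on $\langle(\nabla|\nabla u|)^2|\nabla u|^{q-2}\eta\rangle$ with right-hand side $\||b_m|^{2/q}\rho h\|_q^q$, and then obtains the sup-norm bound in a single stroke from two applications of the Sobolev embedding; the hypothesis $q>2\vee(d-2)$ is exactly what makes $\nabla u\in L^{qd/(d-2)}$ with $qd/(d-2)>d$, so no iteration is needed. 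Your first step is sound (it is essentially the paper's own estimate $3^\circ$/$3'$): the Caccioppoli inequality, the H\"older split anchored at the fixed exponent $q$, and the absorption of $\int|b_m|^2|u_n|^q$ via the form bound applied to $|u_n|^{q/2}$ do give $\|u_n\|_{qd/(d-2)}\le C\||b_m|^{2/q}h\|_q$ uniformly in $n$.

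The bootstrap, however, does not close as described, and this is a genuine gap, in two places. First, at level $q_k$ the H\"older split at the fixed exponent $q$ produces $\int|b_m|^{(q-2)/(q-1)}|u_n|^{s}$ with $s=\frac{q(q_k-1)}{q-1}\in(q_k,q_{k+1})$; your pointwise bound $|b_m|^{(q-2)/(q-1)}\le 1+|b_m|^2$ then calls for the form bound at the power $s/2$, i.e.\ for $\|\nabla|u_n|^{s/2}\|_2^2$, which is neither on the left-hand side at level $k$ nor among the previously established quantities, and "interpolation between the known $L^{q_j}$ bounds" cannot supply it, since the relevant interpolation endpoint is the unknown level-$(k+1)$ quantity. (The step can likely be repaired by a further H\"older splitting off $\int|b_m|^2|u_n|^{q_k}$ together with a Gagliardo--Nirenberg estimate of $\int|u_n|^{\beta}$, $\beta\in(q_k,q_{k+1})$, in terms of $\|\nabla|u_n|^{q_k/2}\|_2$ and $\|u_n\|_{q_k}$; one must then verify that the total power of the gradient on the right stays below $1$, which comes out to $\frac{q-2}{2q}+\frac d2\bigl(\frac1q-\frac1{q_k}\bigr)<1$, i.e.\ precisely $q>d-2$ --- but none of this bookkeeping is in your proposal.) Second, and more seriously, the final passage to $L^\infty$ is wrong as stated: finitely many function-level Moser steps yield only $L^{q_k}$ bounds, never $L^\infty$, and the invoked "Sobolev embedding $W^{1,q_kd/(d-2)}\hookrightarrow L^\infty$ combined with \eqref{reg_star}" is not available, because \eqref{reg_star} applied to the datum $f=|b_m|h$ bounds $\|\nabla u_n\|$ by $\||b_m|h\|_q$ --- the wrong anchor. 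Since $b\in\mathbf F_\delta$ is only in $L^2_{\loc}$, the quantity $\||b_m|h\|_q$ is not controlled uniformly in $m$, and producing a bound by $\||b_m|^{2/q}h\|_q$ (only $|b_m|^2$, never $|b_m|^q$) is the entire point of the lemma. So either one runs the full infinite Moser iteration with a convergent product of constants, or one argues at the gradient level as the paper does; as written, your proof is incomplete at its decisive step. (Your remark on \eqref{rem_j3} is harmless: whether $b_m-b_n$ has form bound $\delta$ or $4\delta$ is immaterial there, since it enters only through the datum, not the drift, and only finiteness of the bound with $\lambda$ independent of $m,n$ is used.)
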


We will also need a weighted variant of Lemma A1. Define  $$\rho(y)\equiv\rho_l (y):=(1+l |y|^2)^{-\nu},   \;\;\; \nu > \frac{d}{2q}+1, \;\; l>0, \;\; y \in \mathbb R^d.$$ Clearly,
\begin{equation}
\label{eta_two_est}
|\nabla \rho| \leq \nu \sqrt{l}\rho, \quad |\Delta \rho| \leq 2\nu (2\nu + d+2 ) l \rho.
\end{equation}

\begin{lemmaA}
\label{lemA}
Assume that the conditions of Theorem \ref{mainthm} are satisfied. 
There exist constants $\mu_0>0$ and $K_1=K_1(\delta,\gamma,\delta_a,q)$ and $K_2=K_2(\delta,\gamma,\delta_a,q,\mu)$ such that, for all $h \in C_c(\mathbb R^d)$, $\mu > \mu_0$ and sufficiently small $l=l(\delta,\gamma,\delta_a,q)>0$, we have:
\begin{equation}
\label{j_1_w}
\tag{$E_1$}
\bigl\|\rho (\mu+\Lambda_{C_\infty}(a_n,\nabla a_n + b_n))^{-1} h \bigr\|_{\infty} \leq K_1\|\rho h\|_q,
\end{equation}
\begin{equation}
\label{j_2_w}
\tag{$E_2$}
\bigl\|\rho (\mu+\Lambda_{C_\infty}(a_n,\nabla a_n + b_n))^{-1}|b_m| h \bigr\|_{\infty} \leq K_2\| |b_m|^\frac{2}{q} \rho h\|_q. 
\end{equation}
\end{lemmaA}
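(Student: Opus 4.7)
The plan is to reduce Lemma A2 to Lemma A1 by a change of variables $v:=\rho u$, which is designed precisely so that the perturbations introduced into the elliptic equation by multiplying by $\rho$ are small, thanks to the bounds on $\nabla\rho/\rho$ and $\nabla^2\rho/\rho$ in \eqref{eta_two_est}.

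First, I set $u_n:=(\mu+\Lambda_{C_\infty}(a_n,\nabla a_n+b_n))^{-1}g$ with $g=h$ (for $(E_1)$) or $g=|b_m|h$ (for $(E_2)$), and $v_n:=\rho u_n$. Since $\Lambda_{C_\infty}(a_n,\nabla a_n+b_n)=-a_n\cdot\nabla^2+b_n\cdot\nabla$ on smooth functions, direct differentiation of $u_n=\rho^{-1}v_n$ gives
\[
\bigl(\mu+\Lambda_{C_\infty}(a_n,\nabla a_n+\tilde b_n)+V_n\bigr)v_n=\rho g,
\]
with $\tilde b_n:=b_n+2a_n\nabla\rho/\rho$ and
\[
V_n:=\rho^{-1}(a_n\cdot\nabla^2\rho)-2\rho^{-2}\langle a_n\nabla\rho,\nabla\rho\rangle-\rho^{-1}(b_n\cdot\nabla\rho).
\]
By \eqref{eta_two_est} the extra drift satisfies $|2a_n\nabla\rho/\rho|\leq 2\|a_n\|_\infty\nu\sqrt{l}$, so it is bounded of order $\sqrt{l}$ and, for $\lambda$ large, lies in $\mathbf{F}_\varepsilon$ for arbitrarily small $\varepsilon$; hence $\tilde b_n\in\mathbf{F}_{\tilde\delta}$ with $\tilde\delta\to\delta$ as $l\to 0$, and the condition \eqref{cond0} (with $\delta$ replaced by $\delta+\delta_a$) is preserved. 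The potential splits as $V_n=V_n^{\rm bdd}+V_n^{\rm fb}$ with $\|V_n^{\rm bdd}\|_\infty=O(l)$ and $|V_n^{\rm fb}|\leq\nu\sqrt{l}\,|b_n|$, so the first part is absorbed by enlarging $\mu_0$ and the second is a form-bounded potential with relative bound $O(\sqrt{l}\,\delta)$.

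For $l$ sufficiently small (depending on $\delta,\gamma,\delta_a,q$), the operator $\Lambda_{C_\infty}(a_n,\nabla a_n+\tilde b_n)+V_n$ falls under the scope of (the proof of) Lemma A1, with the extra small form-bounded potential incorporated into the underlying $L^q\to L^\infty$ iteration without spoiling the strict inequalities in \eqref{cond0}. Applying Lemma A1 to the equation for $v_n$ with right-hand side $\rho g$ yields $\|v_n\|_\infty\leq K_1\|\rho g\|_q$; this is $(E_1)$ when $g=h$, and for $(E_2)$ we use $\rho g=|b_m|(\rho h)$ together with the analogue of \eqref{j_2} to obtain $\|v_n\|_\infty\leq K_2\||b_m|^{\frac{2}{q}}\rho h\|_q$.

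The main obstacle is the form-bounded component $V_n^{\rm fb}=-\rho^{-1}(b_n\cdot\nabla\rho)$: unlike the other correction terms it is not $L^\infty$-bounded and must be propagated through the iteration. Verifying that its contribution is absorbed by the slack in \eqref{cond0} forces the smallness of $l=l(\delta,\gamma,\delta_a,q)$ and also justifies the restriction $\nu>\frac{d}{2q}+1$, which ensures $\rho\in L^q$ and the integrability needed at each iteration step. Modulo this perturbation analysis, the proof reduces to the unweighted estimates of Lemma A1.
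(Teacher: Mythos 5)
Your conjugation idea ($v:=\rho u$, producing an extra bounded drift $2a_n\nabla\rho/\rho$ of size $O(\sqrt l)$ and a potential $V_n$ whose dangerous part is $-\rho^{-1}(b_n\cdot\nabla\rho)$, dominated by $\nu\sqrt l\,|b_n|$) is a reasonable reformulation, but the proposal has a genuine gap at exactly the point you wave at with ``modulo this perturbation analysis.'' The machinery you invoke — Lemma A1, and behind it the $L^q\to L^\infty$ bounds coming from \eqref{reg_star} and the Sobolev embedding — is stated and proved in the paper (and in the cited reference for \eqref{reg_star}) only for operators of the form $-a\cdot\nabla^2+\hat b\cdot\nabla$ with form-bounded drift; there is no version anywhere in the paper covering $-a\cdot\nabla^2+\tilde b_n\cdot\nabla+V_n$ with a form-bounded zeroth-order term, nor a version in which the data $h$ is replaced by $|b_m|h$ for such an operator. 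Supplying that extension is not a routine perturbation: in the $L^q$ testing one must control new mixed terms such as $\langle V_n v,\nabla\cdot(\nabla v\,|\nabla v|^{q-2})\rangle$, i.e.\ quantities of the type $\langle |b_n|^2 v^2|\nabla v|^{q-2}\rangle$, and checking that they are absorbed by the slack in \eqref{cond0} is essentially the same Caccioppoli-type computation, of the same length, as a direct proof of $(E_1)$--$(E_2)$. So the reduction does not shorten anything; it relocates the entire difficulty into an uncited, unproved lemma. There is also a structural problem: in the paper the unweighted estimates of Lemma A1 (in particular \eqref{j_2}, the $|b_m|h$ bound you want to quote for $(E_2)$) are themselves obtained from the proof of Lemma A2 by setting $\rho\equiv 1$; quoting Lemma A1 to prove Lemma A2 therefore inverts the logical order, and to avoid circularity you would have to prove the unweighted estimates (with the potential included) from scratch — which is the whole job. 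A minor point: your $V_n$ has the signs of the first two terms reversed; the correct potential is $-\rho^{-1}(a_n\cdot\nabla^2\rho)+2\rho^{-2}\langle a_n\nabla\rho,\nabla\rho\rangle-\rho^{-1}(b_n\cdot\nabla\rho)$, though this does not affect the size estimates.

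For comparison, the paper proves the weighted bounds directly: with $\eta:=\rho^q$ and $w:=\nabla u_n$ it multiplies the resolvent equation by $\phi=-\nabla\cdot(\eta w|w|^{q-2})$, derives the principal inequality involving $I_q$, $J_q$, estimates the commutator $[\nabla,A_q]_-$, the drift term and the data term using $\nabla_r a_{\cdot\ell}\in\mathbf F_{\gamma_{r\ell}}$, $\nabla a\in\mathbf F_{\delta_a}$, $\hat b_n\in\mathbf F_{\delta_0}$, and then eliminates all the weight-derivative terms $R_q^i$ by choosing $l$ small via \eqref{eta_two_est}; the weighted $L^\infty$ bound then follows by applying the Sobolev embedding twice, and the case $|b_m|h$ is handled inside the same scheme by Young's inequality together with $b_m\in\mathbf F_\delta$ with $\lambda\neq\lambda(m)$. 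In other words, the weight is carried through the energy estimates rather than removed by conjugation; your route could in principle be completed, but only by reproducing an argument of this kind for the conjugated operator, which your proposal does not do.
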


Lemmas A1 and A2 are the new elements of the approach in \cite{KiS4}. Their proofs differs essentially  from the proofs of the analogous results in \cite{KiS4}.

\begin{remark}
\label{proof_rem}
The assumptions on the matrix $a$ in \cite[Theorem 2]{KiS3} are stated in a somewhat different form than in the present paper, but its proof can carried out  without any significant changes in the assumptions 1), 2).
\end{remark}

\section{Proofs of Lemmas A1 and A2}
The proof of Lemma A1 is obtained via a straightforward modification of the proof of Lemma A2. We will attend to it in the end of this section.

\begin{proof}[Proof of Lemma A2] It suffices to prove \eqref{j_1_w}, \eqref{j_2_w} for $(\mu+\Lambda_{q}(a_n,\nabla a_n + b_n))^{-1}$ (cf.\,\eqref{consist}).

Set $A_q^n:=-\nabla \cdot a_n \cdot \nabla$, $D(A_q^n):=W^{2,q}$. Set $\hat{b}_n:=\nabla a_n + b_n$. Then $\hat{b}_n \in \mathbf{F}_{\delta_0}$, $\delta_0:=\delta_a+\delta$.
Put $u_n := (\mu + \Lambda_q(a_n,\hat{b}_n))^{-1} h$, $0 \leq h \in C_c^1$, where $\Lambda_q(a_n,\hat{b}_n)= A_q^n + \hat{b}_n \cdot \nabla$ $(=-a_n \cdot \nabla^2 + b_n \cdot \nabla)$, $D(\Lambda_q(a_n,\hat{b}_n))=W^{2,q}$, $n \geq 1$. Clearly, $ 0 \leq u_n \in W^{3,q}$.

In order to keep our calculations compact we denote $\eta:=\rho^q$. By \eqref{eta_two_est},
\begin{equation}
\label{eta_two_est2}
\tag{$\ast$}
|\nabla \eta| \leq c_1 \sqrt{l}\eta, \quad |\Delta \eta| \leq c_2 l \eta.
\end{equation}
For brevity, 
we omit index $n$ everywhere below: $u \equiv u_n$, $a \equiv a_n$, $\hat{b} \equiv \hat{b}_n$, $A_q \equiv A_q^n$. 
Denote $w:=\nabla u$. 
Set
$$
I_q:=\sum_{r=1}^d\langle (\nabla_r w)^2 |w|^{q-2} \eta \rangle, \quad
J_q:=\langle (\nabla |w|)^2 |w|^{q-2} \eta \rangle,
$$
$$
I^a_q:=\sum_{r=1}^d\langle (\nabla_r w \cdot a \cdot \nabla_r w) |w|^{q-2} \eta \rangle, \quad
J^a_q:=\langle (\nabla |w| \cdot a \cdot \nabla |w|) |w|^{q-2} \eta \rangle.
$$
Set $[F,G]_-:=FG-GF$.

\smallskip

\noindent\textbf{Proof of \eqref{j_1_w}.}~
We will establish a weighted variant of \eqref{reg_star}, then \eqref{j_1_w} will follow by the Sobolev Embedding Theorem.
We multiply the equation $\mu u  + \Lambda_q(a,\hat{b}) u = h $ by $\phi^{}:=- \nabla \cdot (\eta w |w|^{q-2})$ and integrate:
\begin{equation*}
\mu \langle \eta|w|^q \rangle +\langle A_q w^{}, \eta w|w|^{q-2} \rangle + \langle [\nabla,A_q]_-u^{}, \eta w|w|^{q-2}\rangle = \langle -\hat{b} \cdot \nabla u, \phi \rangle  + \langle h, \phi^{} \rangle,
\end{equation*}
\begin{equation*}
\mu \langle \eta|w|^q \rangle + I_q^{a} + (q-2)J^a_q + R^1_q +  \langle [\nabla,A_q]_-u^{}, w|w|^{q-2}\rangle = \langle -\hat{b} \cdot \nabla u, \phi \rangle + \langle  h, \phi \rangle,
\end{equation*}
where $
R^1_q:=\langle a \cdot \nabla |w|, |w|^{q-1} \nabla \eta \rangle
$ (we will get rid of the terms containing $\nabla \eta$, which we denote by $R_q^{~\cdot}$, towards the end of the proof).
Since $a \geq I$, we have $I_q^a \geq I_q$, $J_q^a \geq J_q$. Thus, we arrive at the \textit{principal inequality}
\begin{equation}
\tag{$\bullet$}
\label{principal_id}
\mu \langle |w|^q \rangle + I_q + (q-2)J_q \leq - \langle [\nabla,A_q]_-u^{}, w|w|^{q-2}\rangle + \langle -\hat{b} \cdot \nabla u, \phi \rangle + \langle  h, \phi \rangle - R^1_q.
\end{equation}
We will estimate the RHS of \eqref{principal_id} in terms of $J_q$ and $I_q$. 

First, we estimate $\langle [\nabla,A_q]_- u^{},\eta w|w|^{q-2}\rangle :=\sum_{r=1}^d\langle [\nabla_r,A_q]_- u^{},\eta w_r|w|^{q-2}\rangle.$ From now on, we omit the summation sign in repeated indices.

\begin{claim}
\label{est_comm}
\begin{align*}
|\langle [\nabla_r,A_q]_- u^{},\eta w_r|w|^{q-2}\rangle| & \leq \alpha \gamma\frac{q^2}{4}J_q + \frac{1}{4\alpha}I_q  + (q-2)\biggl[ \beta \gamma \frac{q^2}{4} + \frac{1}{4\beta} \biggr]J_q \\
& + R_q^2 + \big(\alpha + (q-2)\beta\big)R_q^3 + \big(\alpha + (q-2)\beta\big)\lambda\gamma\langle |w|^q \eta \rangle,  \qquad (\alpha, \beta>0)
\end{align*}
where $R_q^2:=\langle (\nabla_ra_{i\ell})w_\ell, w_r |w|^{q-2} \nabla_i \eta\rangle$, $R_q^3:=\frac{q}{2}\langle \nabla |w| ,  |w|^{q-1} \nabla \eta\rangle + \frac{1}{4} \langle |w|^q \frac{(\nabla \eta)^2}{\eta}\rangle$.

\end{claim}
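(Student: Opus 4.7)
\smallskip
\noindent\textbf{Proof proposal for Claim 1.}
The plan is to compute the commutator $[\nabla_r, A_q]_- u$ in closed form, integrate by parts against the test vector $\eta\, w_r |w|^{q-2}$, and estimate the three resulting pieces by a combination of index-by-index Cauchy--Schwarz, pointwise AM--GM with two free parameters $\alpha,\beta$, and the form-boundedness of $\nabla_r a_{\cdot\ell}$.

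Since $A_q = -\nabla_i(a_{i\ell}\nabla_\ell\,\cdot\,)$, one checks directly that $[\nabla_r, A_q]_- u = -\nabla_i\bigl((\nabla_r a_{i\ell})\,w_\ell\bigr)$ (summation over $i,\ell$). Pairing with $\eta w_r |w|^{q-2}$, integrating by parts in $\nabla_i$, and expanding the derivative of the product produces three natural summands:
\begin{equation*}
\sum_r \langle [\nabla_r, A_q]_- u,\ \eta w_r |w|^{q-2}\rangle \;=\; R_q^2 \;+\; T_1 \;+\; (q-2)\,T_2,
\end{equation*}
where $T_1 := \langle (\nabla_r a_{i\ell})\,w_\ell,\ \eta\,(\nabla_i w_r)|w|^{q-2}\rangle$ arises from differentiating $w_r$, $T_2 := \langle (\nabla_r a_{i\ell})\,w_\ell,\ \eta\,w_r\,|w|^{q-3}\nabla_i|w|\rangle$ from differentiating $|w|^{q-2}$, and the $\nabla_i\eta$ contribution is exactly $R_q^2$.

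To bound $T_1$, I would apply pointwise Cauchy--Schwarz in turn on the indices $i$, $\ell$, $r$: first $\sum_i (\nabla_r a_{i\ell})(\nabla_i w_r) \leq |\nabla_r a_{\cdot\ell}|\,|\nabla w_r|$, then $\sum_\ell |w_\ell|\,|\nabla_r a_{\cdot\ell}| \leq V_r^{1/2}|w|$ with $V_r := \sum_\ell |\nabla_r a_{\cdot\ell}|^2$, then $\sum_r V_r^{1/2}|\nabla w_r| \leq V^{1/2}\bigl(\sum_r |\nabla w_r|^2\bigr)^{1/2}$ with $V := \sum_{r,\ell}|\nabla_r a_{\cdot\ell}|^2$. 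The integrand is thereby reduced to $V^{1/2}\bigl(\sum_r |\nabla w_r|^2\bigr)^{1/2}|w|^{q-1}\eta$, and the weighted AM--GM $XY \leq \alpha X^2 + \frac{1}{4\alpha} Y^2$ with $X := V^{1/2}|w|^{q/2}\eta^{1/2}$, $Y := \bigl(\sum_r |\nabla w_r|^2\bigr)^{1/2}|w|^{q/2-1}\eta^{1/2}$ gives $|T_1| \leq \alpha\,\langle V|w|^q\eta\rangle + \frac{1}{4\alpha}\,I_q$. An entirely parallel chain of reductions for $T_2$, now ultimately controlled by $J_q$ rather than $I_q$ (because $\nabla_i|w|$, not $\nabla_i w_r$, appears) and with weight $\beta$, yields $|T_2| \leq \beta\,\langle V|w|^q\eta\rangle + \frac{1}{4\beta}\,J_q$.

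The quantity $\langle V|w|^q\eta\rangle$ is then controlled by form-boundedness: summing $|\nabla_r a_{\cdot\ell}|^2 \in \mathbf{F}_{\gamma_{r\ell}}$ yields $V \in \mathbf{F}_\gamma$, so with the test function $f := |w|^{q/2}\eta^{1/2}$,
\begin{equation*}
\langle V|w|^q\eta\rangle \;\leq\; \gamma\,\bigl(\|\nabla f\|_2^2 + \lambda\,\|f\|_2^2\bigr).
\end{equation*}
Computing $\nabla f = \frac{q}{2}|w|^{q/2-1}\nabla|w|\,\eta^{1/2} + \frac{1}{2}|w|^{q/2}\eta^{-1/2}\nabla\eta$ and squaring, the three resulting integrals are exactly $\frac{q^2}{4}J_q$, the cross term $\frac{q}{2}\langle \nabla|w|,\,|w|^{q-1}\nabla\eta\rangle$, and the tail $\frac{1}{4}\langle |w|^q (\nabla\eta)^2/\eta\rangle$; the latter two bundle into $R_q^3$, while $\|f\|_2^2 = \langle|w|^q\eta\rangle$. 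Substituting this into the bounds for $T_1$ and $(q-2)T_2$ and collecting prefactors gives the asserted inequality. The delicate point will be sequencing the Cauchy--Schwarz steps so that the form potential emerging at the end is exactly $V = \sum_{r,\ell}|\nabla_r a_{\cdot\ell}|^2$ with form bound $\gamma$, free of spurious dimensional factors; and keeping $\alpha,\beta$ as independent free parameters, since they must remain available for absorbing $I_q$ and $J_q$ on the left of the principal inequality \eqref{principal_id} with the correct quantitative constants.
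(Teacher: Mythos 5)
Your proposal is correct and follows essentially the same route as the paper's proof: the same three-term decomposition of $\langle [\nabla_r,A_q]_-u,\eta w_r|w|^{q-2}\rangle$ into the two main terms plus $R_q^2$, the same two-parameter quadratic inequality producing $\big\langle \sum_{r,\ell}(\nabla_r a_{\cdot\ell})^2|w|^q\eta\big\rangle$ together with $\frac{1}{4\alpha}I_q$ and $\frac{1}{4\beta}J_q$, and the same application of form-boundedness with the test function $|w|^{q/2}\eta^{1/2}$ giving $\gamma\frac{q^2}{4}J_q+R_q^3+\lambda\gamma\langle|w|^q\eta\rangle$. The only (immaterial) difference is that your substitution yields the error term with coefficient $\big(\alpha+(q-2)\beta\big)\gamma R_q^3$ rather than $\big(\alpha+(q-2)\beta\big)R_q^3$, which changes nothing since these $\nabla\eta$ terms are later eliminated by taking $l$ small.
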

\begin{proof}[Proof of Claim \ref{est_comm}]Note that $ [\nabla,A_q]_- u = -\nabla \cdot \nabla_r a \cdot \nabla$. Thus,
\begin{align*}
\langle [\nabla_r,A_q]_- u^{},\eta w_r|w|^{q-2}\rangle & = \langle (\nabla_r a_{i\ell})w_\ell, \eta(\nabla_i w_r)|w|^{q-2} \rangle \\
& + (q-2)\langle (\nabla_ra_{i\ell})w_\ell,\eta w_r|w|^{q-3}\nabla_i|w| \rangle  + R_q^2.
\end{align*}
By quadratic inequality,
\begin{align*}
|\langle [\nabla_r,A_q]_- u^{},\eta w_r|w|^{q-2}\rangle| & \leq \alpha \big\langle \textstyle{\sum_{r,\ell}}(\nabla_r a_{\cdot \ell})^2 |w|^{q}\eta \big\rangle + \frac{1}{4\alpha}I_q \\
& + (q-2)\biggl[\beta \big\langle \textstyle{\sum_{r,\ell}}(\nabla_r a_{\cdot \ell})^2 |w|^{q}\eta \big\rangle + \frac{1}{4\beta}J_q \biggr] + R_q^2.
\end{align*}
We use $\nabla_r a_{\cdot \ell} \in \mathbf{F}_{\gamma_{r\ell}}$, i.e.
$
\langle (\nabla_r a_{\cdot \ell})^2 \varphi^2 \rangle \leq \gamma_{r\ell} \langle |\nabla \varphi|^2\rangle  + \lambda\gamma\langle |\varphi|^2 \rangle$, $\varphi \in W^{1,2}$, so that
\begin{equation}
\label{a_est}
\big\langle \textstyle{\sum_{r,\ell}}(\nabla_r a_{\cdot \ell})^2 |w|^{q}\eta \big\rangle \leq \gamma\frac{q^2}{4}J_q + R_q^3 + \lambda\gamma\langle |w|^q \eta \rangle,
\end{equation}
where $\gamma=\sum_{r,\ell}\gamma_{r\ell}$. The proof of Claim \ref{est_comm} is completed.
\end{proof}

We estimate the term $\langle -\hat{b}\cdot w, \phi^{} \rangle$ in \eqref{principal_id} as follows.

\begin{claim}
\label{b_est_lem}
There exist constants $C_i$ \rm{($i=0,1,3$)} such that
\begin{align*} 
 \langle - \hat{b} \cdot w, \phi \rangle & \leq \biggl[\big(\sqrt{\delta_0}\sqrt{\delta_a}+\delta_0\big)\frac{q^2}{4}+(q-2)\frac{q\sqrt{\delta_0}}{2}\biggr]J_q \\
&+ \|a-I\|_\infty \biggl[\alpha_1 \delta_0 \frac{q^2}{4}J_q + \frac{1}{4\alpha_1}I_q\biggr]
  + C_0\|w\|^q_q + C_1\|\eta^{\frac{1}{q}}w\|_q^{q-2} \|\eta^{\frac{1}{q}}h\|^2_q + C_2 R_q^3 +  R_q^4, \qquad (\alpha_1>0)
\end{align*}
where $R_q^4:=- \langle \nabla \eta, w |w|^{q-2}(- \hat{b}\cdot w)\rangle \rangle$.
\end{claim}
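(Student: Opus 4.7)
The plan is to expand $\phi=-\nabla\cdot(\eta w|w|^{q-2})$ via the product rule, producing
$$
\langle-\hat{b}\cdot w,\phi\rangle=R_q^4+(II)+(q-2)(III),
$$
where $(II):=\int(\hat{b}\cdot w)\eta(\nabla\cdot w)|w|^{q-2}$ and $(III):=\int(\hat{b}\cdot w)\eta|w|^{q-3}(w\cdot\nabla|w|)$. The term $(III)$ is harmless: by $|w\cdot\nabla|w||\le|w||\nabla|w||$, Cauchy--Schwarz, and the form-bound $\hat{b}\in\mathbf{F}_{\delta_0}$ applied to the test function $|w|^{q/2}\eta^{1/2}$, one has $\int|\hat{b}|^2|w|^q\eta\le\delta_0\bigl(\tfrac{q^2}{4}J_q+R_q^3+\lambda\|\eta^{1/q}w\|_q^q\bigr)$ (here $R_q^3$ is precisely what the cross and weight-derivative terms in $|\nabla(|w|^{q/2}\eta^{1/2})|^2$ produce), giving $(q-2)|(III)|\le(q-2)\tfrac{q\sqrt{\delta_0}}{2}J_q$ plus absorbable remainders.

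The hard part is $(II)$: a direct Cauchy--Schwarz on $\nabla\cdot w$ would yield a $\sqrt{J_qI_q}$ contribution with a constant unrelated to $\|a-I\|_\infty$, which the condition \eqref{cond0} cannot accommodate for nontrivial $a$. The key trick is to substitute $\nabla\cdot w$ using the equation itself. Since $\Lambda_q(a,\hat{b})u=-a:\nabla^2 u-(\nabla a)\cdot w+\hat{b}\cdot w=-a:\nabla^2 u+b\cdot w$, the identity $\mu u+\Lambda_q u=h$ gives $a:\nabla^2 u=\mu u-h+b\cdot w$; combined with $\nabla\cdot w=I:\nabla^2 u=a:\nabla^2 u-(a-I):\nabla w$ this yields
$$
\nabla\cdot w=\mu u-h+b\cdot w-(a-I):\nabla w.
$$
Substituting splits $(II)$ into four pieces, each matching a term on the right of the claim.

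The $(b\cdot w)$-piece is the principal one. Writing $\hat{b}=\nabla a+b$, one has $(\hat{b}\cdot w)(b\cdot w)=((\nabla a)\cdot w)(b\cdot w)+(b\cdot w)^2$; Cauchy--Schwarz together with the form-bounds of $b\in\mathbf{F}_\delta$ and $\nabla a\in\mathbf{F}_{\delta_a}$, both evaluated on $|w|^{q/2}\eta^{1/2}$, give the combined coefficient $(\sqrt{\delta\delta_a}+\delta)\tfrac{q^2}{4}$, bounded by $(\sqrt{\delta_0}\sqrt{\delta_a}+\delta_0)\tfrac{q^2}{4}$ via $\delta\le\delta_0$. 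The $-(a-I):\nabla w$-piece satisfies $|\cdot|\le\|a-I\|_\infty\bigl(\int|\hat{b}|^2|w|^q\eta\bigr)^{1/2}I_q^{1/2}$, and the weighted AM--GM $\sqrt{XY}\le\alpha_1 X+\tfrac{Y}{4\alpha_1}$ with $X=\delta_0(\tfrac{q^2}{4}J_q+\cdots)$, $Y=I_q$ produces exactly $\|a-I\|_\infty\bigl[\alpha_1\delta_0\tfrac{q^2}{4}J_q+\tfrac{1}{4\alpha_1}I_q\bigr]$. The $\mu u$-piece is bounded by Cauchy--Schwarz and Young's inequality into $C_0\|w\|_q^q$ (the resolvent estimate $\|u\|_q\le C\|h\|_q/(\mu-\mu_0)$ controls the constant), while the $(-h)$-piece, after Cauchy--Schwarz and H\"older with exponents $q/2$ and $q/(q-2)$ applied to $\int h^2|w|^{q-2}\eta\le\|\eta^{1/q}h\|_q^2\|\eta^{1/q}w\|_q^{q-2}$, yields $C_1\|\eta^{1/q}w\|_q^{q-2}\|\eta^{1/q}h\|_q^2$. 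All residual $R_q^3$- and $\|w\|_q^q$-tails are consolidated into $C_2R_q^3$ and $C_0\|w\|_q^q$, while $R_q^4$ remains on the right.
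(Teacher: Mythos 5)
Your decomposition is the same as the paper's: expand $\phi$, keep the $\nabla\eta$-term as $R_q^4$, substitute the equation to write $\Delta u=\mu u+b\cdot w-h-(a-I)\cdot\nabla w$, and estimate the $b$/$\nabla a$-piece, the $(a-I)$-piece and the $h$-piece exactly as in the paper, using the form-bounds of $\hat b$, $b$ and $\nabla a$ on the test function $|w|^{q/2}\eta^{1/2}$ (this is where $R_q^3$ arises). Those parts are fine, and your splitting $(\hat b\cdot w)(b\cdot w)=((\nabla a)\cdot w)(b\cdot w)+(b\cdot w)^2$ even gives a marginally better coefficient than the stated $(\sqrt{\delta_0}\sqrt{\delta_a}+\delta_0)\frac{q^2}{4}$.

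The genuine gap is your treatment of the $\mu u$-piece, $\mu\langle(\hat b\cdot w)\,\eta\,u\,|w|^{q-2}\rangle$. You claim it is absorbed ``into $C_0\|w\|_q^q$'' using the \emph{unweighted} resolvent bound $\|u\|_q\le C\|h\|_q/(\mu-\mu_0)$. Neither half of this works. After Cauchy--Schwarz and Young you are left with either $\frac{\mu^2}{4\varepsilon}\|\eta^{1/q}u\|_q^2\|\eta^{1/q}w\|_q^{q-2}$ or, if you Young-split further, a term $\sim\mu^2\|\eta^{1/q}u\|_q^q$ plus $\mu^2\|w\|_q^q$; in the latter case the coefficient of $\|w\|_q^q$ grows like $\mu^2$ and can no longer be absorbed by $(\mu-\mu_0)\|w\|_q^q$ in the final assembly, where the constant of \eqref{j_1_w} must not depend on $\mu$. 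In the former case you must convert $\|\eta^{1/q}u\|_q$ into an $h$-quantity, and the claim (and the whole point of Lemma A2) requires the \emph{weighted} norm $\|\eta^{1/q}h\|_q$ on the right: invoking the unweighted bound only gives $\|u\|_q\lesssim\|h\|_q$, which produces an unweighted $\|h\|_q$-term that is not in the statement and destroys the later application (in Lemma \ref{finite_prop} one takes $h=\xi_r(1-\xi_k)$, for which $\|\rho h\|_q$ is small but $\|h\|_q$ is not). What is actually needed, and what the paper proves as a separate sub-step ($3^\circ$), is the weighted resolvent estimate $\|\eta^{1/q}u\|_q\le(\mu-\mu_1)^{-1}\|\eta^{1/q}h\|_q$, uniform in $n$: one multiplies $(\mu+\Lambda_q(a,\hat b))u=h$ by $\eta u^{q-1}$, uses $\hat b\in\mathbf F_{\delta_0}$ with the choice $\kappa=\sqrt{\delta_0}/2$ (here $q>\frac{2}{2-\sqrt{\delta_0}}$ is used), and removes the $\nabla\eta$-remainders $R_q^5,R_q^6$ by taking $l$ small via \eqref{eta_two_est2}. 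This commutation of the weight with the resolvent is the nontrivial new ingredient of the weighted claim, and your proposal omits it.
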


\begin{proof}[Proof of Claim \ref{b_est_lem}]
We have $\phi = \eta(-\Delta u)|w|^{q-2} - \eta |w|^{q-3} w \cdot \nabla |w| - \nabla \eta \cdot w|w|^{q-2}$, so
\begin{align*}
 \langle-\hat{b}\cdot w, \phi \rangle & =\langle - \Delta u, \eta|w|^{q-2}(-\hat{b}\cdot w)\rangle - (q-2) \langle  w \cdot \nabla |w|,\eta |w|^{q-3}(-\hat{b}\cdot w)\rangle +R_q^4 \\
&=:F_1+F_2 + R_q^4.
\end{align*}
Set $B_q:=\langle \eta \hat{b}^2|w|^{q}\rangle$. We have
$$
F_2
\leq (q-2) B_{q}^\frac{1}{2} J_{q}^\frac{1}{2}.$$
Next, we bound $F_1$. We represent $-\Delta u=\nabla \cdot (a-I)\cdot w -\mu u - \hat{b}\cdot w + h$,
and evaluate: $\nabla \cdot (a-I)\cdot w=\nabla a \cdot w + (a-I)_{i\ell}\nabla_i w_\ell$, so
\begin{align*}
F_1& =\langle \nabla \cdot (a-I)\cdot w, \eta |w|^{q-2}(-\hat{b}\cdot w)\rangle  + \langle (-\mu u - \hat{b}\cdot w + h),\eta |w|^{q-2}(-\hat{b}\cdot w) \rangle \\
& = \langle \nabla a \cdot w, \eta |w|^{q-2}(-\hat{b}\cdot w) \rangle \\
& + \langle (a-I)_{i\ell}\nabla_i w_\ell, \eta |w|^{q-2}(-\hat{b}\cdot w) \rangle \\
& + \langle (-\mu u - \hat{b}\cdot w + h),\eta |w|^{q-2}(-\hat{b}\cdot w) \rangle.
\end{align*}
Set $P_q:=\langle \eta (\nabla a)^2 |w|^{q} \rangle $. We bound $F_1$ from above by applying consecutively the following estimates:

\smallskip

$1^\circ$) $\langle \nabla a \cdot w, \eta |w|^{q-2}(-\hat{b}\cdot w) \rangle \leq P_{q}^{\frac{1}{2}}B_{q}^{\frac{1}{2}}$.

\smallskip

$2^\circ$) $\langle  (a-I)_{i\ell}\nabla_i w_\ell, \eta |w|^{q-2}(-\hat{b}\cdot w) \rangle 
\leq \|a-I\|_\infty I_q^{\frac{1}{2}} B_{q}^{\frac{1}{2}} \leq \|a-I\|_\infty \bigl(\alpha_1 B_q + \frac{1}{4\alpha_1}I_q\bigr)$.

\smallskip

$3^\circ$) $\langle \mu  u , \eta |w|^{q-2} \hat{b} \cdot w  \rangle  \leq \frac{\mu}{\mu-\mu_1} B_{q}^\frac{1}{2} \|\eta^{\frac{1}{q}}w\|_q^\frac{q-2}{2}  \|\eta ^{\frac{1}{q}}h\|_q$ for some $\mu_1>0$, for all $\mu>\mu_1$.

Indeed, $\langle \mu  u , \eta|w|^{q-2} (-\hat{b} \cdot w) \rangle \leq \mu  B_q^\frac{1}{2} \|\eta^{\frac{1}{q}}w\|_q^\frac{q-2}{2} \|\eta^{\frac{1}{q}}u\|_q$ and $\|\eta^{\frac{1}{q}}u\|_q \leqslant (\mu-\mu_1)^{-1}\|\eta^{\frac{1}{q}}h\|_q$, $\mu>\mu_1$, for appropriate $\mu_1>0$. To prove the last estimate, 
we multiply $(\mu+\Lambda_q(a,\hat{b}))u=h$ by $\eta u^{q-1}$ to obtain
$$
\mu\langle u, \eta u^{q-1} \rangle - \langle \nabla \cdot a \cdot w,\eta u^{q-1}\rangle  = \langle - \hat{b} \cdot w,\eta u^{q-1}\rangle + \langle h, \eta u^{q-1}\rangle,
$$
\begin{align*}
\mu\|\eta^{\frac{1}{q}}u\|_q^q  +\frac{4(q-1)}{q^2}\langle \eta \nabla u^{\frac{q}{2}}\cdot a \cdot \nabla u^{\frac{q}{2}}\rangle  + R_q^5 = \langle - \hat{b} \cdot w,\eta u^{q-1}\rangle + \langle h, \eta u^{q-1}\rangle,
\end{align*}
where $R_q^5:=\frac{2}{q}\bigl\langle a \cdot \nabla u^{\frac{q}{2}}, (\nabla \eta) u^{\frac{q}{2}} \bigr\rangle$. In the RHS we apply the quadratic inequality to $\langle - \hat{b} \cdot \nabla u,\eta u^{q-1}\rangle$  to obtain
\begin{align*}
\mu\|\eta^{\frac{1}{q}}u\|_q^q & +\frac{4(q-1)}{q^2}\langle \eta \nabla u^{\frac{q}{2}}\cdot a \cdot \nabla u^{\frac{q}{2}}\rangle  + R_q^5 \\
& \leq \kappa \frac{2}{q}\langle \eta (\nabla u^{\frac{q}{2}})^2\rangle + \frac{1}{2\kappa q}\langle \eta \hat{b}^2 u^{q} \rangle  + \langle h, \eta u^{q-1}\rangle \qquad (\kappa>0),
\end{align*}
\begin{align*}
\mu\|\eta^{\frac{1}{q}}u\|_q^q & +\frac{4(q-1)}{q^2}\langle \eta \nabla u^{\frac{q}{2}}\cdot a \cdot \nabla u^{\frac{q}{2}}\rangle  + R_q^5 \\
& \leq \kappa \frac{2}{q}\langle \eta (\nabla u^{\frac{q}{2}})^2\rangle + \frac{1}{2\kappa q}\langle \eta \hat{b}^2 u^{q} \rangle  + \|\eta^{\frac{1}{q}}h\|_q \|\eta^{\frac{1}{q}}u\|_q^{q-1}.
\end{align*}
Since $a \geq I$, we can replace in the LHS $\langle \eta \nabla u^{\frac{q}{2}}\cdot a \cdot \nabla u^{\frac{q}{2}}\rangle$ by $\langle \eta (\nabla u^{\frac{q}{2}})^2\rangle$. By $\hat{b} \in \mathbf{F}_{\delta_0}$, $\langle \eta \hat{b}^2 u^{q} \rangle \leq \delta_0 \langle \eta (\nabla u^{\frac{q}{2}})^2\rangle + 2 \langle \nabla u^{\frac{q}{2}} , \nabla \eta \rangle  + \langle(\nabla \eta)^2 u^{q} \rangle  + \lambda\delta_0 \langle\eta u^{q} \rangle$, 
and thus we arrive at
$$
\bigl(\mu- \mu_1 \bigr)\|\eta^{\frac{1}{q}}u\|_q^q + \biggl[\frac{4(q-1)}{q^2} - \kappa \frac{2}{q} - \frac{1}{2\kappa q} \delta_0 \biggr]\langle \eta (\nabla u^{\frac{q}{2}})^2\rangle \leq - R_q^5 + R_q^6 + \|\eta^{\frac{1}{q}}h\|_q \|\eta^{\frac{1}{q}}u\|_q^{q-1},
$$  
where $\mu_1:=\lambda \delta_0$, $R_q^6:=\frac{1}{2\kappa q} \bigl(2 \langle \nabla u^{\frac{q}{2}} , \nabla \eta \rangle  + \langle(\nabla \eta)^2 u^{q} \rangle \bigr)$. We select  $\kappa:=\frac{\sqrt{\delta_0}}{2}$. Then, since $q>\frac{2}{2-\sqrt{\delta_0}}$, the coefficient of $\langle \eta (\nabla u^{\frac{q}{2}})^2\rangle $ is positive.
In turn, by \eqref{eta_two_est2}, $$-R_q^5 \leq c_2\sqrt{l} \|a\|_\infty \bigl\langle |\nabla u^{\frac{q}{2}}|, \eta u^{\frac{q}{2}} \bigr\rangle \leq \frac{c_2}{2}\sqrt{l}\|a\|_\infty(\langle \eta (\nabla u^{\frac{q}{2}})^2 \rangle + \langle \eta u^q\rangle ).$$ We estimate $R_q^6$ similarly. The required estimate $(\mu-\mu_1)\|\eta^{\frac{1}{q}}u\|_q \leq \|\eta^{\frac{1}{q}}h\|_q$ now follows upon selecting $l$  sufficiently small in the definition of $\eta$ $(=\rho^q)$ at expense of increasing $\mu_1$ slightly. This completes the proof of $3^\circ$).

\smallskip

$4^\circ$) $\langle \hat{b} \cdot w, \eta|w|^{q-2} \hat{b} \cdot w \rangle = B_{q} .$

\smallskip

$5^\circ$) $\langle h, \eta|w|^{q-2} (- \hat{b} \cdot w)\rangle |\leq B_{q}^\frac{1}{2} \|\eta^{\frac{1}{q}}w\|_q^\frac{q-2}{2} \|\eta^{\frac{1}{q}}h\|_q.$

\smallskip

In $3^\circ$) and $5^\circ$) we estimate $B_{q}^\frac{1}{2} \|\eta^{\frac{1}{q}}w\|_q^\frac{q-2}{2} \|\eta^{\frac{1}{q}}h\|_q \leq \varepsilon_0 B_q + \frac{1}{4\varepsilon_0}\|\eta^{\frac{1}{q}}w\|_q^{q-2} \|\eta^{\frac{1}{q}}h\|^2_q$ ($\varepsilon_0>0$).

\smallskip

The above estimates yield:
\begin{align*}
 \langle-\hat{b}\cdot w, \phi \rangle  & = F_1+F_2+R_q^4 \\
& \leq P_q^{\frac{1}{2}}B_{q}^{\frac{1}{2}} + \|a-I\|_\infty I_{q}^{\frac{1}{2}}B_{q}^{\frac{1}{2}}+B_{q} + (q-2)B_{q}^{\frac{1}{2}}J_{q}^{\frac{1}{2}} \\
& + \varepsilon_0 \bigg(\frac{\mu}{\mu-\mu_1}+1 \bigg) B_q +  C_1(\varepsilon_0)\|\eta^{\frac{1}{q}}w\|_q^{q-2} \|\eta^{\frac{1}{q}}h\|^2_q + R_q^4.
\end{align*}
Selecting $\varepsilon_0>0$ sufficiently small, using that the assumption on $\delta_0$, $\delta_a$ are strict inequalities, we can and will ignore below the terms multiplied by $\varepsilon_0$. 

Finally, we use in the last estimate: By $\hat{b} \in \mathbf{F}_{\delta_0}$,
$$B_q \leq \frac{q^2}{4}\delta_0 J_q + R_q^3 + \lambda\delta_0\langle |w|^q \eta \rangle$$ (cf.\,\eqref{a_est}), and by  $\nabla a \in \mathbf{F}_{\delta_a}$,
\begin{equation*}
P_q \leq  \frac{ q^2}{4} \delta_a J_q + R_q^3 + \lambda\delta_a \|w\|_q^q.  
\end{equation*}
This yields Claim \ref{b_est_lem}.
\end{proof}

We estimate the term $\langle h, \phi \rangle$ in \eqref{principal_id} as follows.

\begin{claim} 
\label{f_est_lem}
For each $\varepsilon_0>0$ there exists a constant $C=C(\varepsilon_0)<\infty$  such that 
$$
\langle h, \phi \rangle \leq \varepsilon_0 I_q + C \|w\|_q^{q-2} \|h\|^2_q + R_q^7,
$$
where $R_q^7:=-\langle \nabla \eta \cdot w|w|^{q-2},h\rangle$.
\end{claim}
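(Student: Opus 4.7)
The plan is to use the same decomposition of $\phi$ that was used in the proof of Claim~\ref{b_est_lem}, namely
\[
\phi = \eta(-\Delta u)|w|^{q-2} - (q-2)\,\eta |w|^{q-3} (w\cdot\nabla |w|) - \nabla\eta\cdot w|w|^{q-2},
\]
so that pairing with $h$ splits $\langle h,\phi\rangle$ into three terms; the third is exactly $R_q^7$, so it suffices to absorb the first two into $\varepsilon_0 I_q + C\|w\|_q^{q-2}\|h\|_q^2$.

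For the first term, one uses the pointwise inequality $|\Delta u|=|\nabla_i w_i|\leq \sqrt{d}\,|\nabla w|$ (Cauchy--Schwarz on the index $i$) together with Cauchy--Schwarz in the $\eta$-weighted inner product:
\[
\bigl|\langle h,\eta(-\Delta u)|w|^{q-2}\rangle\bigr|\leq \sqrt{d}\,I_q^{1/2}\,\bigl\langle \eta\, h^2|w|^{q-2}\bigr\rangle^{1/2}.
\]
For the second term, the elementary bound $|w\cdot\nabla|w||\leq |w|\,|\nabla w|$ (Kato) yields
\[
\bigl|(q-2)\langle h,\eta|w|^{q-3}(w\cdot\nabla|w|)\rangle\bigr|\leq (q-2)\,I_q^{1/2}\,\bigl\langle \eta\, h^2|w|^{q-2}\bigr\rangle^{1/2}.
\]
(If one prefers, this second term can be bounded by $J_q^{1/2}\le I_q^{1/2}$ times the same factor; either is sufficient.)

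It remains to dispose of the scalar $\bigl\langle \eta h^2|w|^{q-2}\bigr\rangle$. By H\"older with exponents $\frac{q}{2}$ and $\frac{q}{q-2}$,
\[
\bigl\langle \eta h^2|w|^{q-2}\bigr\rangle \leq \|\eta^{1/q}h\|_q^{2}\,\|\eta^{1/q}w\|_q^{q-2}\leq \|h\|_q^{2}\,\|w\|_q^{q-2},
\]
where we used that $\eta=\rho^q\leq 1$ pointwise. Inserting this into the two displayed estimates and applying the quadratic inequality $ab\leq \varepsilon_0 a^2+\tfrac{1}{4\varepsilon_0}b^2$ to the product $I_q^{1/2}\cdot(\|h\|_q\|w\|_q^{(q-2)/2})$ yields the claimed bound $\varepsilon_0 I_q + C(\varepsilon_0)\|w\|_q^{q-2}\|h\|_q^2 + R_q^7$ after renaming the constant $\varepsilon_0$ to absorb a harmless factor.

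There is no real obstacle here: unlike Claim~\ref{b_est_lem}, the drift $\hat b$ and the matrix $a$ do not enter $\langle h,\phi\rangle$, so no form-bound hypothesis is needed; the estimate is driven purely by Cauchy--Schwarz, Kato's inequality $|\nabla|w||\leq |\nabla w|$, and H\"older, with the weight $\eta\leq 1$ used only to convert the weighted $L^q$ norms of $h$ and $w$ into unweighted ones. The only point requiring mild care is keeping the remainder $R_q^7$ unexpanded (i.e.\ not integrating the $\nabla\eta$ term by parts a second time), so that it can be combined later with the other $R_q^{\,\cdot}$ terms when $l$ is chosen small at the end of the proof.
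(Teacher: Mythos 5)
Your proposal is correct and follows essentially the same route as the paper: the same three-term decomposition of $\langle h,\phi\rangle$ (with the $\nabla\eta$ term kept as $R_q^7$), the pointwise bound $|\Delta u|^2\leq d\,|\nabla_r w|^2$ together with $|\nabla|w||\leq|\nabla w|$, the H\"older estimate $\langle\eta h^2|w|^{q-2}\rangle\leq\|\eta^{1/q}h\|_q^2\|\eta^{1/q}w\|_q^{q-2}$, and then the standard quadratic inequality to absorb everything into $\varepsilon_0 I_q + C(\varepsilon_0)\|w\|_q^{q-2}\|h\|_q^2$. The only cosmetic difference is that you pass from the weighted to the unweighted norms via $\eta\leq 1$, which matches the claim as stated, whereas the paper keeps the weighted norms; this is immaterial for the argument.
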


\begin{proof}[Proof of Claim \ref{f_est_lem}]
We have:
\[
 \langle h, \phi \rangle =\langle - \Delta u, \eta|w|^{q-2}h\rangle - (q-2) \langle \eta|w|^{q-3} w \cdot \nabla |w|,h\rangle + R_q^7=:F_1+F_2+R_q^7.
 \]
Due to $|\Delta u|^2 \leq d |\nabla_r w|^2$ and $\langle \eta |w|^{q-2}h^2\rangle \leq \|\eta^{\frac{1}{q}}w\|_q^{q-2}\|\eta^{\frac{1}{q}}h\|^2_q$,
\[
F_1 \leq \sqrt{d}I_q^{\frac{1}{2}}\|\eta^{\frac{1}{q}}w\|_q^{\frac{q-2}{2}}\|\eta^{\frac{1}{q}}h\|_q, \qquad F_2 \leq (q-2)J_q^{\frac{1}{2}}\|\eta^{\frac{1}{q}}w\|_q^{\frac{q-2}{2}}\|\eta^{\frac{1}{q}}h\|_q.  
\]
Now the standard quadratic estimates yield Claim \ref{f_est_lem}.
\end{proof}

Since the assumption on $\gamma$, $\delta_0$, $\delta_a$ in the theorem are strict inequalities, we can select $\varepsilon_0>0$ sufficiently small so that we can ignore the term $\varepsilon_0 I_q$ in Claim \ref{f_est_lem}

Applying the estimates of Claims \ref{est_comm}, \ref{b_est_lem} and \ref{f_est_lem} in \eqref{principal_id}, we arrive at: There exists $\mu_0>\mu_1$ such that
\begin{equation*}
\begin{array}{ll}
&(\mu-\mu_0)\|w\|_q^q + I_q + (q-2)J_q - \alpha \gamma \frac{q^2}{4}J_q - \frac{1}{4\alpha}I_q - (q-2)\biggl[\beta\gamma\frac{q^2}{4}+\frac{1}{4\beta} \biggr]J_q \\
&-\biggl((\sqrt{\delta_0}\sqrt{\delta_a} + \delta_0)\frac{q^2}{4} + (q-2)\frac{q\sqrt{\delta_0}}{2} \biggr)J_q - \|a-I\|_\infty \biggl(\alpha_1 \delta_0 \frac{q^2}{4}J_q + \frac{1}{4\alpha_1}I_q \biggr) \\
& \leq C\|\eta^{\frac{1}{q}}h\|^q_q -R_q^1 + R_q^2 + CR_q^3 + R_q^4 + R_q^7.
\end{array}
\end{equation*}
We select $\alpha=\beta:=\frac{1}{q\sqrt{\gamma}}$, $\alpha_1:=\frac{1}{q\sqrt{\delta_0}}$.
By the assumptions of the theorem, the coefficient of $I_q$ 
$$
1-\frac{q}{4}(\sqrt{\gamma} + \|a-I\|_\infty \sqrt{\delta_0}) - \varepsilon_0> 0,
$$
so, by $I_q \geq J_q$, 
\begin{align*}
&(\mu-\mu_0)\|w\|_q^q + \bigg[(q-1)\big(1-\frac{q\sqrt{\gamma}}{2}\big) - (\sqrt{\delta_0}\sqrt{\delta_a} + \delta_0)\frac{q^2}{4} - (q-2)\frac{q\sqrt{\delta_0}}{2} -\|a-I\|_\infty \frac{q\sqrt{\delta_0}}{2} \biggr]J_q \\
& \leq C\|\eta^{\frac{1}{q}}h\|^q_q -R_q^1 + R_q^2 + CR_q^3 + R_q^4 + R_q^7.
\end{align*}
By the assumptions of the theorem the coefficient of $J_q$ is positive.
Selecting $l$ in the definition of $\eta$ sufficiently small,  we eliminate the terms $R_q^i$ ($i=1,2,3,4,7$) using the estimates \eqref{eta_two_est2} as in the proof of $3^\circ$), at expense of increasing $\mu_0$ and decreasing the coefficient of $J_q$ slightly, arriving at
$$
(\mu-\mu_0)\|w\|_q^q + c J_q \leq C\|\eta^{\frac{1}{q}}h\|^q_q, \quad c>0.
$$
In $J_q \equiv \frac{4}{q^2}\langle \eta (\nabla |\nabla u|^{\frac{q}{2}})^2 \rangle$, we commute $\eta$ and $\nabla$ using \eqref{eta_two_est2}, arriving at 
$$
\langle (\nabla |\nabla (\eta^{\frac{1}{q}} u)|^{\frac{q}{2}})^2\rangle \leq C' \|\eta^{\frac{1}{q}}h\|^q_q.
$$
Applying the Sobolev Embedding Theorem twice, we obtain \eqref{j_1_w}.

\medskip

\noindent\textbf{Proof of \eqref{j_2_w}.}
We modify the proof of \eqref{j_1_w}. Now, $u = (\mu + \Lambda_q(a,\hat{b}))^{-1} |b_m|h$, where $0 \leqslant h \in C_c$. The modification amounts to
 replacing  $h$ by $|b_m|h$ which requires the following changes in the estimates involving $h$.
Namely, in the proof of Claim \ref{b_est_lem}, we replace $3^\circ$) with 

$3'$) $\langle \hat{b} \cdot w, \eta|w|^{q-2} \mu u_n \rangle 
\leqslant \mu C(\mu) B_q^\frac{1}{2} \|\eta^{\frac{1}{q}}w\|_q^\frac{q-2}{2} \|\eta^{\frac{1}{q}} |b_m|^{\frac{2}{q}}h\|_q
$
where we used $
\|\eta^\frac{1}{q}u_n\|_q \leqslant C(\mu)\|\eta^{\frac{1}{q}} |b_m|^{\frac{2}{q}}h\|_q$. The proof of the last estimate follows the proof in $3^\circ$), but now we estimate $\langle h, \eta u^{q-1}\rangle$ by Young's inequality:
\begin{align*}
\langle |b_m|h,\eta u^{q-1}\rangle & \leq \frac{q-1}{q}\sigma^\frac{q}{q-1}\langle \eta |b_m|^{\frac{q-2}{q-1}} u^p\rangle + \frac{\sigma^{-q}}{q}\langle \eta |b_m|^2 h^q\rangle \qquad (\sigma>0)\\
& \leq \frac{q-1}{q}\sigma^\frac{q}{q-1}\langle \eta (1+|b_m|^2)u^q\rangle + \frac{\sigma^{-q}}{q}\langle \eta |b_m|^2 h^q\rangle.
\end{align*}
It remains to apply $b_m \in \mathbf{F}_\delta$ with $\lambda \neq \lambda(m)$ in order to estimate $\langle \eta(1+|b_m|^2)u^q\rangle$ in terms of $\langle \eta (\nabla u^{\frac{q}{2}})^2 \rangle$, $\|\eta^{\frac{1}{q}}u\|_q^q$ and the terms containing $\nabla \eta$ which can be discarded at expense on increasing $\mu_0$. We select $\sigma>0$ sufficiently small to obtain the required estimate.

\smallskip

We replace 
$5^\circ$) by

$5')$ $\langle |b_m| h, \eta|w|^{q-2} (- \hat{b} \cdot w)\rangle |\leq B_q^{\frac{1}{2}} \big\langle \eta (|b_m| h)^2 |w|^{q-2} \big\rangle^{\frac{1}{2}},$
where, in turn,
\begin{align}
\big\langle \eta (|b_m| h)^2 |w|^{q-2} \big\rangle 
& \leq \frac{q-2}{q}\epsilon^{\frac{q}{q-2}}  \bigl\langle \eta |b_m|^2 |w|^{q} \bigr\rangle
+ \frac{2}{q}\epsilon^{-\frac{2}{q}} \bigl\langle \eta |b_m|^2 h^q \bigr\rangle \notag \\
& \text{(use $b_m \in \mathbf{F}_\delta$ with $\lambda \neq \lambda(m)$)} \\
& \leq \frac{q-2}{q}\epsilon^{\frac{q}{q-2}}  \biggl[\frac{q^2}{4}\delta J_q + R_q^3 + \lambda\delta \langle \rangle|w|^q\eta \biggr]
+ \frac{2}{q}\epsilon^{-\frac{2}{q}} \bigl\langle \eta |b_m|^2 h^q \bigr\rangle
\label{extra_est}
\end{align}
where $\epsilon>0$ is to be chosen sufficiently small.

In the proof of Claim \ref{f_est_lem}, we replace the estimate $\langle \eta |w|^{q-2}h^2\rangle \leq \|\eta^{\frac{1}{q}}w\|_q^{q-2}\|\eta^{\frac{1}{q}}h\|^2_q$ by \eqref{extra_est}. 
The analogue of $R_q^7$ is $-\langle \nabla \eta \cdot w |w|^{q-2}, |b_m|h \rangle$, which we eliminate by estimating using \eqref{eta_two_est2}
$$-\langle \nabla \eta \cdot w |w|^{q-2}, |b_m|h \rangle \leqslant c_1^2 l \bigl\langle \eta (|b_m|h)^2|w|^{q-2}\bigr\rangle^\frac{1}{2}  \|\eta^{\frac{1}{q}}w\|_q^\frac{q}{2},$$
applying \eqref{extra_est} to the first term in the RHS, and selecting $l$ in the definition of $\eta$ sufficiently small.

The rest of the proof repeats the proof of \eqref{j_1_w}.
\end{proof}

\begin{proof}[Proof of Lemma A1]
The proof of \eqref{j_2} repeats the proof of \eqref{j_2_w} with $\rho$ taken to be $\equiv 1$. The proof of \eqref{rem_j3} also repeats the proof of \eqref{j_2_w} with $\rho \equiv 1$ where we take into account that $b_m-b_n \in \mathbf{F}_{\delta}$ with $\lambda \neq \lambda(m,n)$.
\end{proof}

\section{Proof of Theorem \ref{mainthm}}
We follow the approach of \cite{KiS4}.  For the sake of completeness, we have included all the details.

\begin{lemma}
\label{ae_rem}
For every $x \in \mathbb R^d$ and $t>0$, $b_n(X(t)) \rightarrow b(X(t))$, $a_n(X(t)) \rightarrow a(X(t))$ $\mathbb P_x$ a.s. as $n \uparrow \infty$.
\end{lemma}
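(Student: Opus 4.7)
The plan is to split the claim into two independent steps: (a) the time-$t$ marginal $\mu_t^x := \mathbb P_x \circ X(t)^{-1}$ of the Feller process is absolutely continuous with respect to Lebesgue measure on $\mathbb R^d$; and (b) the mollified, truncated approximants $a_n$ and $b_n$ converge to $a$ and $b$ Lebesgue-a.e. Granted (a) and (b), every Lebesgue null set is also $\mu_t^x$-null, so $a_n(X(t)) \to a(X(t))$ and $b_n(X(t)) \to b(X(t))$ hold $\mathbb P_x$-a.s., which is the claim.

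For (a), the decisive analytic input is the $L^q \to C_\infty$ smoothing bound recorded in \eqref{SF}. Taking $0 \leq f \in L^q \cap C_c$, the identity $\mathbb E_{\mathbb P_x}[f(X(t))] = T^t f(x)$ combined with \eqref{consist} and \eqref{SF} yields
\[
\int_{\mathbb R^d} f\, d\mu_t^x \;=\; T^t f(x) \;\leq\; \|T^t f\|_\infty \;\leq\; C_t \|f\|_q.
\]
Since $L^q \cap C_c$ is dense in $L^q$, the positive linear functional $f \mapsto \int f\, d\mu_t^x$ extends boundedly to $L^q$, so by the Riesz representation theorem $\mu_t^x$ admits a density $p_t^x \in L^{q'}(\mathbb R^d)$ with $1/q + 1/q' = 1$; in particular $\mu_t^x \ll dx$.

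For (b) I exploit the explicit form of the approximants. Because $b \in L^2_{\loc}$ with $\mathbf 1_n \to 1$ pointwise, and $a-I \in L^\infty$ with $\eta_n \to 1$ pointwise, one has $\mathbf 1_n b \to b$ and $\eta_n(a-I) \to a-I$ in $L^p_{\loc}$ for every admissible $p$. Using the freedom to let $\varepsilon_n, \epsilon_n \downarrow 0$ as rapidly as we wish (cf.\,remark \ref{approx_rem}), together with the standard fact that $e^{\varepsilon \Delta} g \to g$ at every Lebesgue point of $g$ and a Borel-Cantelli argument, one upgrades this to Lebesgue-a.e.\ convergence of the full sequences $b_n \to b$ and $a_n \to a$.

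The substantive step is (a): Feller continuity by itself does not force the time-$t$ distribution to be absolutely continuous with respect to Lebesgue, and it is precisely the genuine smoothing $T^t: L^q \to C_\infty$ from \eqref{SF}, itself obtained by iterating the gain \eqref{reg_star} proved in Section~2, that licenses the reduction to Lebesgue-a.e.\ convergence. Once (a) is secured, (b) is a soft mollification fact, and combining the two gives the lemma.
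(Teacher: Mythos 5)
Your proposal is correct and follows essentially the same route as the paper: the paper also deduces from the $L^q\rightarrow C_\infty$ bound \eqref{SF} that the time-$t$ law of $X(t)$ charges no Lebesgue-null set, and then combines this with the Lebesgue-a.e.\ convergence $a_n\rightarrow a$, $b_n\rightarrow b$ built into the choice of the approximants (cf.\,remark \ref{approx_rem}). Your extra details (the $L^{q'}$ density via duality, the Borel--Cantelli upgrade of the mollifier convergence) merely flesh out steps the paper states without proof.
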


\begin{proof}[Proof of Lemma \ref{ae_rem}]
The proof repeats the proof of \cite[Lemma 1]{KiS4}.
By \eqref{SF} and the Dominated Convergence Theorem, for any $\mathcal L^d$-measure zero set $G \subset \mathbb R^d$ and every $t>0$, $\mathbb P_x[X(t) \in G]=0$. 
Since $b_n \rightarrow b$, $a_n \rightarrow a$ pointwise in $\mathbb R^d$ outside of an $\mathcal L^d$-measure zero set, we have the required.
\end{proof}

\begin{lemma}
\label{finite_prop} 
For every $x \in \mathbb R^d$ and $t>0$, $\mathbb P_x[X(t)=\infty]=0$.
\end{lemma}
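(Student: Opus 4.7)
The plan is to show that the Feller semigroup $T^t := e^{-t\Lambda_{C_\infty}(a,\nabla a + b)}$ is conservative. Extend $T^t$ to bounded Borel functions via the Markov process by $\bar T^t f(x) := \mathbb E_{\mathbb P_x}[f(X(t))]$ with the convention $f(\infty) := 0$; then $\bar T^t \mathbf 1_{\mathbb R^d}(x) = \mathbb P_x[X(t) \in \mathbb R^d]$, so the lemma reduces to $\bar T^t \mathbf 1_{\mathbb R^d} \equiv 1$. By Laplace-transforming and using the right-continuity of $t \mapsto \bar T^t \mathbf 1_{\mathbb R^d}(x)$, it suffices to prove $\mu \bar R_\mu \mathbf 1_{\mathbb R^d}(x) = 1$ for some $\mu > \mu_0$, where $\bar R_\mu$ denotes the extended resolvent. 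Fixing $\phi_k \in C_c^\infty$ with $0 \le \phi_k \uparrow \mathbf 1_{\mathbb R^d}$, monotone convergence along the Markov process further reduces the claim to $\mu R_\mu \phi_k(x) \to 1$ as $k \to \infty$, where $R_\mu := (\mu + \Lambda_{C_\infty}(a,\nabla a+b))^{-1}$.

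First I would verify that for each fixed $n$ the approximating coefficients $a_n$, $\sigma_n$ and $\hat b_n := \nabla a_n + b_n$ are smooth and bounded --- a routine check using heat-kernel $L^p \to L^\infty$ estimates (note $\mathbf 1_n b \in L^\infty$ with norm $\le n$; $\eta_n(a-I) \in L^\infty$; and $\eta_n \nabla a \in L^2$ has compact support, so its mollification lies in $L^\infty$). Hence the SDE
$$
dX = -\hat b_n(X)\,dt + \sqrt{2}\,\sigma_n(X)\,dW
$$
has a unique strong non-exploding solution, whose transition semigroup coincides with $e^{-t\Lambda_{C_\infty}(a_n,\nabla a_n + b_n)}$ on $C_\infty$ (by Hille--Yosida/uniqueness of the corresponding martingale problem); consequently $\mu \bar R_\mu^n \mathbf 1_{\mathbb R^d} \equiv 1$ for each $n$.

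Next I would upgrade the weighted bound \eqref{j_1_w} from $h \in C_c$ to bounded Borel $h$ with $\rho_l h \in L^q$. Approximating such $h$ by $h\chi_j \in C_c$ with $\chi_j \uparrow 1$ and applying dominated convergence both on the right-hand side (using $\rho_l \in L^q$ since $\nu > \tfrac{d}{2q}$) and on the left (via the Markov process representation of $\bar R_\mu^n$) yields the same inequality for $h = \mathbf 1_{\mathbb R^d} - \phi_k$. Combining this with the conservativeness of the approximating process from the previous step gives
$$
\rho_l(x)\,\bigl|\tfrac{1}{\mu} - R_\mu^n \phi_k(x)\bigr| = \rho_l(x)\,\bigl|\bar R_\mu^n(\mathbf 1_{\mathbb R^d} - \phi_k)(x)\bigr| \le K_1\,\|\rho_l(\mathbf 1_{\mathbb R^d}-\phi_k)\|_q,
$$
with $K_1$ and $\mu_0$ uniform in $n$. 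Passing $n \to \infty$ via \eqref{conv_c} transfers the bound to $R_\mu$; letting $k \to \infty$ then forces the right-hand side to $0$ by dominated convergence, yielding $\mu R_\mu \phi_k(x) \to 1$ as required. The main obstacle is this last step: it relies decisively on the $n$-independence of the constants $K_1$, $\mu_0$ in \eqref{j_1_w} and on establishing that every regularised problem genuinely produces a non-exploding diffusion --- without either ingredient, the subtraction ``$\mathbf 1_{\mathbb R^d} - \phi_k$'' inside the resolvent cannot be carried out with a usable tail bound.
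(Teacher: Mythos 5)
Your argument is correct and is essentially the paper's own proof: both rest on conservativeness of the smooth approximating diffusions, the weighted bound \eqref{j_1_w} with constants uniform in $n$ applied to the tail $\mathbf 1_{\mathbb R^d}-\phi_k$ (your extension of \eqref{j_1_w} to such $h$ via cutoffs is exactly the paper's $\lim_{r}\xi_r(1-\xi_k)$ maneuver), and passage to the limit through \eqref{conv_c}. The only cosmetic difference is the final step, where you invert the Laplace transform using right-continuity of $t\mapsto\mathbb P_x[X(t)\in\mathbb R^d]$, while the paper argues by contradiction from the monotonicity of $t\mapsto\mathbb P_x[X(t)=\infty]$.
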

\begin{proof}[Proof of Lemma \ref{finite_prop}]
The proof repeats the proof of \cite[Lemma 2]{KiS4}.
First, let us show that for every $\mu>\mu_0$, 
\begin{equation}
\label{conv_n9}
\int_0^\infty e^{-\mu t}\mathbb E^n_x[\xi_k(X(t))]dt \rightarrow \frac{1}{\mu} \quad \text{ as $k \uparrow \infty$ \textit{uniformly in} $n$}.
\end{equation}
(See \eqref{xi_k} for the definition of $\xi_k$.)
Since $\int_0^\infty e^{-\mu t}\mathbb E^n_x[\mathbf{1}_{\mathbb R^d}(X(t))]dt=\frac{1}{\mu}$, \eqref{conv_n9} is equivalent to
$\int_0^\infty e^{-\mu t}\mathbb E^n_x[(\mathbf{1}_{\mathbb R^d}-\xi_k)(X(t))]dt  \rightarrow 0$ as $k \uparrow \infty$ uniformly in $n$. 
We have
\begin{align*}
& \int_0^\infty e^{-\mu t}\mathbb E^n_x[(\mathbf{1}_{\mathbb R^d}-\xi_k)(X(t))]dt \\
& \text{(we use the Dominated Convergence Theorem)} \\
& = \lim_{r \uparrow \infty}\int_0^\infty e^{-\mu t}\mathbb E^n_x[\xi_r(1-\xi_k)(X(t))]dt \\
& = \lim_{r \uparrow \infty}(\mu+\Lambda_{C_\infty}(a_n,\nabla a_n + b_n))^{-1}[\xi_r(1-\xi_k)](x) \\
& \text{(we apply crucially \eqref{j_1_w})} \\
& \leq \rho(x)^{-1} K_1\lim_{r \uparrow \infty} \|\rho\xi_r (1-\xi_k)\|_p \leq \rho(x)^{-1} K_1\|\rho (1-\xi_k) \|_p \rightarrow 0 \quad \text{as $k \uparrow \infty$},
\end{align*}
which yields \eqref{conv_n9}.

Now, since  $\mathbb E_x[\xi_k(X(t))]=\lim_{n} \mathbb E^n_x[\xi_k(X(t))]$ uniformly on every compact interval of $t \geq 0$, see \eqref{conv_c}, it follows from \eqref{conv_n9} that $$\int_0^\infty e^{-\mu t}\mathbb E_x[\xi_k(X(t))]dt \rightarrow \frac{1}{\mu} \quad \text{ as }k \uparrow \infty.$$

Finally, suppose that  $\mathbb P_x[X(t)=\infty]$ is strictly positive for some $t>0$.
By the construction of $\mathbb P_x$, $t \mapsto \mathbb P_x[X(t)=\infty]$ is non-decreasing, and so
$\varkappa:=\int_0^\infty e^{-\mu t}\mathbb E_x[\mathbf{1}_{X(t)=\infty}]dt>0$. Now,
$$
\frac{1}{\mu}=\int_0^\infty e^{-\mu t}\mathbb E_x[\mathbf{1}_{\bar{\mathbb R}^d}(X(t))]dt \geq \varkappa + \int_0^\infty e^{-\mu t}\mathbb E_x[\xi_k(X(t))]dt.
$$
Selecting $k$ sufficiently large, we arrive at contradiction. 
\end{proof}

Let $\mathbb P^n_x$  be the probability measures associated with $e^{-t\Lambda_{C_\infty}(a_n,\nabla a_n + b_n)}$, $n=1,2,\dots$

Set $\mathbb E_x:=\mathbb E_{\mathbb P_x}$, and $\mathbb E^n_x:=\mathbb E_{\mathbb P_x^n}$.

The space $\Omega_D:=D([0,\infty[,\mathbb R^d)$ is defined to be the subspace of $\bar{\Omega}_D$ $(:=D([0,\infty[,\bar{\mathbb R}^d))$ consisting of the trajectories $X(t) \neq \infty$, $0 \leq t <\infty$. 
Let $\mathcal F'_{t}:=\sigma(X(s) \mid 0 \leq s \leq t, X \in \Omega_D)$, $\mathcal F'_{\infty}:=\sigma(X(s) \mid 0 \leq s <\infty, X \in \Omega_D)$.

By Lemma \ref{finite_prop}, $(\Omega_D,\mathcal  F_\infty')$ has full $\mathbb P_x$-measure in $(\bar{\Omega}_D,\mathcal  F_\infty)$.
We denote the restriction of $\mathbb P_x$ from $(\bar{\Omega}_D,\mathcal  F_\infty)$ to $(\Omega_D,\mathcal  F_\infty')$ again by $\mathbb P_x$.

\begin{lemma}
\label{Y_prop}
For every $x \in \mathbb R^d$ and $g \in C_c^\infty(\mathbb R^d)$, 
$$
g(X(t)) - g(x) + \int_0^t (-a \cdot \nabla^2 g + b\cdot\nabla g)(X(s))ds, $$
is a martingale relative to $(\Omega_D,\mathcal F'_t, \mathbb P_x)$.

\end{lemma}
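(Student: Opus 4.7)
The plan is to derive the martingale property under $\mathbb P_x$ by passing to the limit in the analogous identity under the smooth approximations $\mathbb P_x^n$. Since $a_n,b_n$ are smooth and $g\in C_c^\infty\subset D(\Lambda_{C_\infty}(a_n,\nabla a_n+b_n))$, Dynkin's formula gives that
\[
M_n^g(t) := g(X(t)) - g(x) + \int_0^t L_n g(X(s))\,ds, \qquad L_n g := -a_n\cdot\nabla^2 g + b_n\cdot\nabla g,
\]
is a martingale under $\mathbb P_x^n$. By the monotone class theorem, it suffices to verify, for every $0\le s<t$ and every bounded continuous $\mathcal F_s'$-measurable cylindrical functional $H:\Omega_D\to\mathbb R$,
\[
\mathbb E_x^n\bigl[H\bigl(M_n^g(t)-M_n^g(s)\bigr)\bigr]\;\longrightarrow\;\mathbb E_x\bigl[H\bigl(M^g(t)-M^g(s)\bigr)\bigr], \qquad n\to\infty;
\]
since the left-hand side vanishes for every $n$, this yields the lemma.

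The Feller convergence \eqref{conv_c} gives convergence of the finite-dimensional distributions $\mathbb P_x^n\to\mathbb P_x$, handling at once the $H(g(X(t))-g(X(s)))$ piece. For the integral $\int_s^t L_n g(X(r))\,dr$, insert a reference pair $(a_m,b_m)$ from the same family of smooth approximants and split $L_n g = (L_n - L_m) g + L_m g$. Since for fixed $m$ the function $L_m g$ is bounded and compactly supported, Fubini combined with dominated convergence applied to the bounded integrand $r\mapsto\mathbb E_x^n[H\cdot L_m g(X(r))]$ (which converges pointwise in $r$ by finite-dimensional convergence) yields
\[
\mathbb E_x^n\Bigl[H\int_s^t L_m g(X(r))\,dr\Bigr]\;\longrightarrow\;\mathbb E_x\Bigl[H\int_s^t L_m g(X(r))\,dr\Bigr], \qquad n\to\infty.
\]
Passing $m\to\infty$ then reduces the lemma to the uniform-in-$n$ error bounds
\[
\sup_n\mathbb E_x^n\int_s^t\bigl|(L_n-L_m)g\bigr|(X(r))\,dr \xrightarrow[m\to\infty]{} 0 \;\text{ and }\; \mathbb E_x\int_s^t\bigl|(L-L_m)g\bigr|(X(r))\,dr \xrightarrow[m\to\infty]{} 0.
\]

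These bounds constitute the main obstacle, and they are precisely what the new Lemma A1 was designed for. Choose $\xi\in C_c^\infty$ with $\xi\equiv 1$ on $\supp\nabla g\cup\supp\nabla^2 g$. From
\[
\mathbb E_x^n\int_s^t\bigl|(L_n-L_m)g\bigr|(X(r))\,dr \le e^{\mu t}\bigl((\mu+\Lambda_{C_\infty}(a_n,\nabla a_n+b_n))^{-1}|(L_n-L_m)g|\xi\bigr)(x),
\]
together with the $L^q\to L^\infty$ bound on the resolvent (a consequence of \eqref{SF}, uniform in $n$ thanks to remark \ref{approx_rem}) applied to the contribution $|a_n-a_m||\nabla^2 g|\xi$ and with \eqref{rem_j3} applied to the contribution $|b_n-b_m||\nabla g|\xi$, one obtains an upper bound
\[
C\bigl(\||a_n-a_m||\nabla^2 g|\xi\|_q + \||b_n-b_m|^{2/q}|\nabla g|\xi\|_q\bigr).
\]
Both summands go to $0$ as $n,m\to\infty$: the first by dominated convergence (using $a_n\to a$ a.e.\ with the uniform bound $\|a_n\|_\infty\le\|a\|_\infty$), the second because $|b|^2\in L^2_{\loc}$ forces $b_n\to b$ in $L^2_{\loc}$. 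The analogous $\mathbb E_x$ bound follows from the same estimates applied to the limiting resolvent $(\mu+\Lambda_{C_\infty}(a,\nabla a+b))^{-1}$. The main technical hurdle is exactly this uniform-in-$n$ control against the critical singularities of $b$; everything else is a routine limiting argument.
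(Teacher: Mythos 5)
Your argument is correct in substance and rests on the same analytic core as the paper's proof: the martingale identity under $\mathbb P_x^n$ for the smooth approximations $(a_n,b_n)$, the splitting of the generator with a fixed reference index $m$, the semigroup convergence \eqref{conv_c} for the fixed-$m$ part, and the uniform-in-$n$ resolvent estimates of Lemmas A1/A2 to control the singular part of the drift; so the key inputs are identical. Where you differ is the finishing device: you test the increment against bounded continuous cylindrical functionals $H$ of the past and pass to the limit via convergence of finite-dimensional distributions plus a monotone class argument (the classical martingale-problem scheme), whereas the paper passes to the limit only in the single-time identity $\mathbb E^n_x[g(X(t))]-g(x)+\mathbb E^n_x\int_0^t L_n g(X(s))\,ds=0$ for every starting point $x$ and then deduces the martingale property from the Markov property of the limit process, which is free here because $\mathbb P_x$ comes from the Feller semigroup. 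Your route is slightly more general (it would apply even if the limit were not known a priori to be Markov) at the cost of the extra, though routine, verification that \eqref{conv_c} gives convergence against past-measurable cylindrical functionals; the paper's route is shorter. One loose end in your write-up: the estimates \eqref{rem_j3} and \eqref{j_2_w} are formulated for the regularized drifts (i.e. for $|b_m|$ and $|b_m-b_n|$), so the error term under the limiting measure, $\mathbb E_x\int_s^t|(b-b_m)\cdot\nabla g|(X(r))\,dr$, cannot be bounded by ``the same estimates applied to the limiting resolvent'' directly. As in steps $(\mathbf a)$, $(\mathbf b)$ of the paper's proof, you first need $b_n(X(r))\to b(X(r))$ $\mathbb P_x$-a.s. (Lemma \ref{ae_rem}, which uses the absolute continuity of the one-time distributions coming from \eqref{SF}) together with Fatou's Lemma to reduce that term to differences $b_n-b_m$ of approximants, after which your $L^2_{\loc}$ convergence argument closes the bound; this is a gap in detail, fillable with tools you already cite, not in strategy.
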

\begin{proof}We modify the proof of \cite[Lemma 3]{KiS4}.
Fix $\mu>\mu_0$. In what follows, $0<t\leq T<\infty$.

\smallskip

$(\mathbf a)$ $\mathbb E_x \int_0^t \bigl|b\cdot\nabla g \bigr|(X(s))ds<\infty$. Indeed, 
\begin{align*}
& \notag \mathbb E_x \int_0^t \bigl|b\cdot\nabla g \bigr|(X(s))ds  \\
& \notag (\text{we apply Fatou's Lemma, cf.\,Lemma \ref{ae_rem}}) \\
& \notag \leq \liminf_n \mathbb E_x \int_0^t \bigl|b_n\cdot\nabla g \bigr|(X(s))ds  = \liminf_n  \int_0^t e^{-s\Lambda_{C_\infty}(a,\nabla a + b)}\bigl|b_n\cdot\nabla g \bigr|(x)ds   \\
& =\liminf_n  \int_0^t e^{\mu s} e^{-\mu s }e^{-s\Lambda_{C_\infty}(a,\nabla a + b)}\bigl|b_n\cdot\nabla g \bigr|(x)ds  \\
& \notag \leq e^{\mu T} \liminf_n (\mu+\Lambda_{C_\infty}(a,\nabla a + b))^{-1}|b_n| |\nabla g|(x) \\
& \notag \text{(we apply \eqref{j_2} with } h=|\nabla g|) \\
& \notag \leq C_1 e^{\mu T}\liminf_n  \langle |b_n|^2 |\nabla g|^p \rangle^{\frac{1}{p}} \leq C_1 e^{\mu T} 2^\frac{1}{p}\big(\langle |b|^2 |\nabla g|^p \rangle^\frac{1}{p}+ \lim_n \langle |b-b_n|^2 |\nabla g|^p \rangle ^\frac{1}{p}\big) \\
& = C_1 e^{\mu T}2^\frac{1}{p} \langle |b|^2 |\nabla g|^p \rangle^\frac{1}{p} <\infty.
\end{align*}

$(\mathbf a')$ $\mathbb E_x \int_0^t \bigl|a\cdot\nabla^2 g \bigr|(X(s))ds<\infty$ since $a$ is bounded.

\smallskip

$(\mathbf b)$ We have
$$
\mathbb E^n_x[g(X(t))] \rightarrow \mathbb E_x[g(X(t))], 
$$
$$
\mathbb E_x^n \int_0^t ( b_n\cdot\nabla g)(X(s))ds \rightarrow \mathbb E_x\int_0^t (b\cdot\nabla g)(X(s))ds, 
$$ $$\mathbb E_x^n \int_0^t ( a_n\cdot\nabla^2 g)(X(s))ds \rightarrow \mathbb E_x\int_0^t (a\cdot\nabla^2 g)(X(s))ds, 
$$
and also, for $h \in C_c^\infty$,
$$
\mathbb E^n_x\int_0^t
(|b_n| h)(X(s))ds \rightarrow \mathbb E_x\int_0^t
(|b|h)(X(s))ds
$$
as $n \uparrow \infty$.
Indeed, the first  convergence follows from \eqref{conv_c}. The second convergence follows from $(\mathbf c)$ below. The third convergence follows from a straightforward modification $(\mathbf c)$ (use \eqref{SF} and the obvious fact that $a_n \cdot \nabla^2g \rightarrow a \cdot \nabla^2 g$ in $L^p$). The fourth convergence follows from $\mathbb E_x \int_0^t (|b||h|)(X(s))ds<\infty$, a straightforward modification of $(\mathbf a)$.

\smallskip

$(\mathbf c)$ $\mathbb E_x\int_0^t (b_n\cdot\nabla g)(X(s))ds - \mathbb E^n_x\int_0^t (b_n\cdot\nabla g)(X(s))ds\rightarrow 0.$ We have:
\begin{align*}
& \mathbb E_x\int_0^t (b_n\cdot\nabla g)(X(s))ds - \mathbb E^n_x\int_0^t (b_n\cdot\nabla g)(X(s))ds \\
& = \int_0^t \left( e^{-s\Lambda_{C_\infty}(a,\nabla a + b)} - e^{-s\Lambda_{C_\infty}(a_n,\nabla a_n + b_n)} \right) (b_n\cdot\nabla g)(x)ds \\
& =\int_0^t \left( e^{-s\Lambda_{C_\infty}(a,\nabla a + b)} - e^{-s\Lambda_{C_\infty}(a_n,\nabla a_n + b_n)} \right) ((b_n-b_m)\cdot\nabla g)(x)ds  \\
& + \int_0^t \left( e^{-s\Lambda_{C_\infty}(a,\nabla a + b)} - e^{-s\Lambda_{C_\infty}(a_n,\nabla a_n + b_n)} \right) (b_m\cdot\nabla g)(x)ds =: S_1 + S_2,
\end{align*}
where $m$ is to be chosen. 
Arguing as in the proof of $(\mathbf a)$, we obtain:
\begin{align*}
S_1(x) & \leq   
e^{\mu T} (\mu+\Lambda_{C_\infty}(a,\nabla a + b))^{-1}|(b_n-b_m)\cdot\nabla g|(x)  + e^{\mu T} (\mu+\Lambda_{C_\infty}(a_n,\nabla a_n + b_n))^{-1}|(b_n-b_m)\cdot\nabla g|(x).
\end{align*}
Since $b_n-b_m \rightarrow 0$ in $L^2_{\loc}$ as $n,m \uparrow \infty$, \eqref{rem_j3} yields $S_1 \rightarrow 0$ as $n,m \uparrow \infty$.
Now, fix a sufficiently large $m$. Since $e^{-s\Lambda_{C_\infty}(a,\nabla a + b)}=s\text{-}C_\infty\text{-}\lim_n e^{-s\Lambda_{C_\infty}(a_n,\nabla a_n + b_n)}$ uniformly in $0 \leq s \leq T$, cf.~\eqref{conv_c}, 
we have $S_2 \rightarrow 0$ as $n \uparrow \infty$. 
The proof of $(\mathbf c)$ is completed.

\medskip

Now we are in position to complete the proof of Lemma \ref{Y_prop}.
Since $a_n \in [C_c^\infty]^{d \times d}$, $b_n \in [C_c^\infty]^d$, 
$$g(X(t)) - g(x) + \int_0^t (-a_n \cdot \nabla^2 g + b_n\cdot\nabla g)(X(s))ds \text{ is a martingale under $\mathbb P^n_x$,}
$$
so the function
$$
x \mapsto \mathbb E^n_x[g(X(t))] - g(x) +\mathbb E^n_x\int_0^t (-a_n \cdot \nabla^2  g + b_n\cdot\nabla g)(X(s))ds \quad \text{ is identically zero in } \mathbb R^d.
$$
Thus by $(\mathbf b)$, the function
$$
x \mapsto \mathbb E_x[g(X(t))] - g(x) +\mathbb E_x\int_0^t (-a \cdot \nabla^2  g + b\cdot\nabla g)(X(s))ds \quad  \text{ is identically zero in } \mathbb R^d,
$$
i.e. $g(X(t)) - g(x) +\int_0^t (-a \cdot \nabla^2  g + b\cdot\nabla g)(X(s))ds$ is a martingale under $\mathbb P_x$.
\end{proof}

\begin{lemma}
\label{cont_prop}
For $x \in \mathbb R^d$, 
$\Omega$ has full $\mathbb P_x$-measure in $\Omega_D$.

\end{lemma}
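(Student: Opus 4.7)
The plan is to deduce continuity of paths under $\mathbb{P}_x$ by transferring it from the approximating measures $\mathbb{P}^n_x$, each of which is the law of a classical It\^{o} diffusion (since $a_n$, $b_n$, $\sigma_n$ are all smooth and bounded; see Remark \ref{approx_rem}) and is therefore supported on $\Omega = C([0,\infty[,\mathbb{R}^d)$. Concretely, I would show tightness of $\{\mathbb{P}^n_x\}$ in $C([0,T],\mathbb{R}^d)$ for each $T>0$, and combine this with the finite-dimensional convergence coming from \eqref{conv_c} to conclude, via Prokhorov, that the weak limit $\mathbb{P}_x$ is concentrated on $C([0,T],\mathbb{R}^d)$; intersecting over $T\in\mathbb{N}$ (and using Lemma \ref{finite_prop} to discard the $\{X=\infty\}$ contribution) yields $\mathbb{P}_x(\Omega)=1$.

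For tightness, I would verify the Kolmogorov--Chentsov criterion uniformly in $n$: namely, $\mathbb{E}^n_x|X(t)-X(s)|^p \leq C_T(t-s)^{1+\alpha}$ for some $p,\alpha>0$. Under $\mathbb{P}^n_x$, $X$ satisfies the classical It\^{o} SDE with coefficients $(-b_n,\sqrt{2}\sigma_n)$, so the stochastic integral part is handled by the Burkholder--Davis--Gundy inequality together with $\|\sigma_n\|_\infty \leq \|\sigma\|_\infty$, contributing $C(t-s)^{p/2}$. For the drift, I would use Lemma A1 \eqref{j_1_w} (with $\rho\equiv 1$) and a Khasminskii-type lemma to obtain the uniform bound $\sup_n \mathbb{E}^n_x\exp\bigl(\lambda \int_0^T |b_n|(X(r))\,dr\bigr) < \infty$, from which the moment estimate $\mathbb{E}^n_x\bigl|\int_s^t b_n(X)\,dr\bigr|^p \leq C_T(t-s)^p$ follows by H\"{o}lder's inequality.

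The convergence of finite-dimensional distributions $\mathbb{P}^n_x \to \mathbb{P}_x$ follows from the Markov property and the strong operator convergence \eqref{conv_c}, by the standard argument: for cylindrical test functions $F(X) = f_1(X(t_1))\cdots f_k(X(t_k))$, the expectation $\mathbb{E}^n_x[F]$ expresses as an iterated action of the semigroup, which converges locally uniformly on $C_\infty$ to the corresponding expression for $\mathbb{P}_x$. Combined with tightness, Prokhorov's theorem yields weak convergence of $\mathbb{P}^n_x$ to $\mathbb{P}_x$ in $C([0,T],\mathbb{R}^d)$, and closedness of $C([0,T],\mathbb{R}^d)$ inside the Skorokhod space implies $\mathbb{P}_x(C([0,T],\mathbb{R}^d))=1$.

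The main obstacle is the uniform-in-$n$ drift moment bound: $b_n$ inherits only the form-boundedness of $b$, with no uniform $L^p$ or $L^\infty$ control, so a direct pointwise estimate of $|b_n|$ is unavailable. The essential technical ingredient is therefore Lemma A1, which allows one to control $e^{-t\Lambda_{C_\infty}(a_n,\nabla a_n + b_n)}|b_n|$ (and its iterates) in the sup-norm via an $L^q$-norm of $|b_n|^{2/q}$ that is uniform in $n$; this is then upgraded to Khasminskii-type exponential integrability in the spirit of step $(\mathbf{a})$ of the proof of Lemma \ref{Y_prop}.
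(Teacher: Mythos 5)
There is a genuine gap in the key quantitative step, and it is exactly the step that the hypotheses of this paper are designed to avoid. Your tightness argument hinges on a Khasminskii-type bound $\sup_n\mathbb E^n_x\exp\bigl(\lambda\int_0^T|b_n|(X(r))dr\bigr)<\infty$, hence on the smallness condition $\sup_x\sup_n\mathbb E^n_x\int_s^t|b_n|(X(r))dr\rightarrow 0$ as $t-s\downarrow 0$. That condition is essentially the Kato-class condition $\mathbf K^{d+1}_{\delta_1}$ on the drift, and the paper explicitly points out that $\mathbf F_\delta-\mathbf K^{d+1}_{\delta_1}\neq\varnothing$ (already $[L^d+L^\infty]^d\not\subset\mathbf K^{d+1}_{\delta_1}$), so it cannot be extracted from form-boundedness. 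Nor do the available estimates supply it: \eqref{j_1_w}, \eqref{j_2_w} (these are in Lemma A2, not A1) are resolvent bounds at a \emph{fixed} $\mu>\mu_0$, of the form $\|\rho(\mu+\Lambda_{C_\infty}(a_n,\nabla a_n+b_n))^{-1}|b_m|h\|_\infty\leq K_2\||b_m|^{2/q}\rho h\|_q$; they yield $\mathbb E^n_x\int_0^t|b_n|(X(r))dr\leq e^{\mu t}\rho(x)^{-1}\cdot\mathrm{const}$, an $O(1)$ bound with no decay as $t\downarrow 0$, no uniformity in $x$ (the weight $\rho(x)^{-1}$ grows), and no stated decay of $K_2$ in $\mu$. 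Without this, the moment estimate $\mathbb E^n_x\bigl|\int_s^t b_n(X)dr\bigr|^p\leq C(t-s)^p$ and hence the Kolmogorov--Chentsov criterion uniform in $n$ are not justified, and the whole Prokhorov/tightness route collapses. (The BDG part and the finite-dimensional convergence from \eqref{conv_c} are fine, but they are not where the difficulty lies.)

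The paper's proof is deliberately soft and avoids any modulus-of-continuity or exponential-moment estimate. It works directly under the limit measure $\mathbb P_x$ on the Skorokhod space: for closed bounded sets $A,B$ with $\dist(A,B)>0$ one picks $g\in C_c^\infty$ with $g=0$ on $A$, $g=1$ on $B$, uses Lemma \ref{Y_prop} (already proved, and requiring only the integrability statement $(\mathbf a)$ there, which \emph{is} available from Lemma A1 because the test function $|\nabla g|$ is compactly supported) to know that $M^g$ is a martingale, and then considers $K^g(t)=\int_0^t\mathbf 1_A(X(s-))dM^g(s)$. Since $g$ and its derivatives vanish on $A$, $K^g(t)=\sum_{s\leq t}\mathbf 1_A(X(s-))g(X(s))$, and the martingale property forces $\mathbb E_x\bigl[\sum_{s\leq t}\mathbf 1_A(X(s-))\mathbf 1_B(X(s))\bigr]=0$, i.e.\ there are no jumps from $A$ to $B$; varying $A,B$ gives continuity, and Lemma \ref{finite_prop} has already removed the point at infinity. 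If you want to salvage your approach you would have to replace Khasminskii by increment estimates that use only resolvent bounds against compactly supported (or $\rho$-weighted) test functions, which in effect reproduces the paper's jump-counting argument; as written, the exponential-integrability claim is the missing ingredient and it is false in the generality of $\mathbf F_\delta$.
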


\begin{proof}[Proof of Lemma \ref{cont_prop}] The proof repeats the proof of \cite[Lemma 4]{KiS4}. Let $A$, $B$ be arbitrarily bounded closed sets in $\mathbb R^d$, $\dist(A,B)>0$. 
Fix $g \in C_c^\infty(\mathbb R^d)$ such that $g = 0$ on $A$, $g = 1$ on $B$. Set ($X \in \Omega_D$)
$$
M^g(t):=g(X(t)) - g(x) + \int_0^t (-a \cdot \nabla^2 g + b\cdot\nabla g)(X(s))ds, \quad
K^g(t):=\int_0^t \mathbf{1}_A(X(s-))dM^g(s),
$$
then
\begin{align*}
K^g(t) &=\sum_{s \leq t} \mathbf{1}_A \left(X(s-)\right)g(X(s)) +
\int_0^t \mathbf{1}_A(X(s-))\bigl(-a \cdot \nabla^2 g + b\cdot\nabla g \bigr)(X(s))ds \\
& =\sum_{s \leq t} \mathbf{1}_A \left(X(s-)\right)g(X(s)).
\end{align*}
By Lemma \ref{Y_prop}, $M^g(t)$ is a martingale, and hence so is $K^g(t)$. Thus, $\mathbb{E}_x\bigl[\sum_{s \leq t} \mathbf{1}_A (X(s-))g(X(s))\bigr]=0.$ Using the Dominated Convergence Theorem, we obtain
$\mathbb{E}_x\bigl[\sum_{s \leq t} \mathbf{1}_A (X(s-))\mathbf{1}_B(X(s))\bigr]=0$. The proof of Lemma \ref{cont_prop} is completed.
\end{proof}

We denote the restriction of $\mathbb P_x$ from $(\Omega_D, \mathcal F_\infty')$  to $(\Omega,\mathcal G_\infty)$ again by $\mathbb P_x$. Lemma \ref{Y_prop} and Lemma \ref{cont_prop} combined yield

\begin{lemma}
\label{thm1}
For every $x \in \mathbb R^d$ and $g \in C_c^\infty(\mathbb R^d)$, 
$$
g(X(t)) - g(x) + \int_0^t (-a \cdot \nabla^2 g + b\cdot\nabla g)(X(s))ds, \quad X \in \Omega,$$
is a continuous martingale relative to $(\Omega,\mathcal G_t, \mathbb P_x)$.
\end{lemma}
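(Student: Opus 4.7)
The plan is to assemble the statement by restricting the martingale property from $(\Omega_D,\mathcal F'_t,\mathbb P_x)$ down to $(\Omega,\mathcal G_t,\mathbb P_x)$. All the substantial analytic content has already been carried out: Lemma \ref{Y_prop} supplies the martingale property of
$$
M^g(t):=g(X(t))-g(x)+\int_0^t(-a\cdot\nabla^2 g+b\cdot\nabla g)(X(s))\,ds
$$
on $(\Omega_D,\mathcal F'_t,\mathbb P_x)$, while Lemma \ref{cont_prop} ensures that $\Omega$ has full $\mathbb P_x$-measure in $\Omega_D$. So Lemma \ref{thm1} is essentially a measure-theoretic corollary.

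The steps I would carry out are as follows. First, I would note that since $\Omega$ has full $\mathbb P_x$-measure, the restriction of $\mathbb P_x$ from $(\Omega_D,\mathcal F'_\infty)$ to $(\Omega,\mathcal G_\infty)$ is well-defined and compatible with filtrations in the sense that any $A\in\mathcal G_s$ is of the form $A=A'\cap\Omega$ for some $A'\in\mathcal F'_s$, with $\mathbb P_x(A')=\mathbb P_x(A)$. Second, I would verify the martingale identity on the restricted space: for $s\le t$ and $A\in\mathcal G_s$, choose such an $A'\in\mathcal F'_s$; then
$$
\mathbb E_x[M^g(t)\mathbf 1_A]=\mathbb E_x[M^g(t)\mathbf 1_{A'}]=\mathbb E_x[M^g(s)\mathbf 1_{A'}]=\mathbb E_x[M^g(s)\mathbf 1_A],
$$
by the Lemma \ref{Y_prop} martingale property, so $\mathbb E_x[M^g(t)\mid\mathcal G_s]=M^g(s)$ $\mathbb P_x$-a.s. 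Integrability of $M^g(t)$ is inherited (the drift integral is integrable by step $(\mathbf a)$ of the proof of Lemma \ref{Y_prop}, applied to $g$ itself, and $g$ is bounded).

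Third, I would observe that on $\Omega$, the path $t\mapsto X(t)$ is continuous by definition, so $t\mapsto g(X(t))$ is continuous because $g\in C_c^\infty$; the integral $t\mapsto\int_0^t(-a\cdot\nabla^2 g+b\cdot\nabla g)(X(s))\,ds$ is absolutely continuous by $(\mathbf a)$--$(\mathbf a')$ of Lemma \ref{Y_prop}; hence $t\mapsto M^g(t)$ is continuous $\mathbb P_x$-a.s. on $\Omega$.

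Since the two inputs Lemmas \ref{Y_prop} and \ref{cont_prop} already do all the real work, I do not anticipate any genuine obstacle here. The only point requiring a moment of care is the filtration book-keeping in restricting from $\mathcal F'_t$ (the $\sigma$-algebra generated by the coordinate process on $\Omega_D$) to $\mathcal G_t$ (the coordinate $\sigma$-algebra on $\Omega$); this is handled by the standard observation above that these two $\sigma$-algebras agree on $\Omega$ modulo $\mathbb P_x$-null sets, which is exactly what Lemma \ref{cont_prop} provides.
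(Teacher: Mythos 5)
Your proposal is correct and matches the paper exactly: the paper gives no separate argument for this lemma, stating only that Lemma \ref{Y_prop} and Lemma \ref{cont_prop} combined yield it after restricting $\mathbb P_x$ to $(\Omega,\mathcal G_\infty)$, which is precisely the restriction-of-measure and filtration bookkeeping you spell out.
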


\begin{lemma}
\label{thm2}
For every $x \in \mathbb R^d$ and  $t>0$,
$
\mathbb E_x\int_0^t |b(X(s))|ds<\infty,
$ and, for $f(y)=y_i$ or $f(y)=y_iy_j$, $1 \leq i,j \leq d$,
$$
f(X(t)) - f(x) + \int_0^t (-\Delta f + b\cdot\nabla f)(X(s))ds, \quad X \in C([0,\infty[,\mathbb R^d),
$$
is a continuous martingale relative to $(\Omega,\mathcal G_t, \mathbb P_x)$.
\end{lemma}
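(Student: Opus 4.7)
I would prove the lemma in two parts: first the integrability $\mathbb E_x\int_0^t|b(X(s))|\,ds<\infty$, and then the martingale property for $f(y)=y_i$ and $f(y)=y_iy_j$. Neither follows directly from Lemma~\ref{thm1}, since such $f$ lies outside $C_c^\infty$ and the estimate used in step $(\mathbf a)$ of Lemma~\ref{Y_prop} relied on compactness of the support of $\nabla g$.

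$(\mathbf 1)$ For the integrability, I would mirror step $(\mathbf a)$ of the proof of Lemma~\ref{Y_prop}, replacing the unweighted bound~\eqref{j_2} by the weighted bound~\eqref{j_2_w} of Lemma~A2. By Fatou's lemma together with Lemma~\ref{ae_rem} and the semigroup representation,
$$\mathbb E_x\int_0^t|b|(X(s))\,ds \,\leq\, e^{\mu t}\liminf_n(\mu+\Lambda_{C_\infty}(a,\nabla a+b))^{-1}|b_n|(x),$$
and applying~\eqref{j_2_w} (transferred to the limit operator via~\eqref{consist}) with $h$ a smooth cutoff increasing to $1$ gives the pointwise bound $\rho(x)^{-1}K_2\||b_n|^{2/q}\rho\|_q$, finite uniformly in $n$: testing the form-boundedness $b_n\in\mathbf F_\delta$ against $\rho^{q/2}$ yields $\int|b_n|^2\rho^q\,dy \leq \delta\int|\nabla\rho^{q/2}|^2 + \lambda\delta\int\rho^q<\infty$.

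$(\mathbf 2)$ For the martingale property, I would cut off $f$ by setting $f_R(y):=\chi(y/R)f(y)$ with $\chi\in C_c^\infty$, $\chi\equiv 1$ on $B(0,1)$, so that $f_R\in C_c^\infty$ and $f_R$ agrees with $f$ up to second derivatives on $B(0,R)$. Lemma~\ref{thm1} provides a continuous martingale $M^{f_R}$. With $\tau_R:=\inf\{s\geq 0:|X(s)|\geq R\}$ (a.s.\,finite and $\tau_R\uparrow\infty$ $\mathbb P_x$-a.s.\,by Lemmas~\ref{finite_prop} and~\ref{cont_prop}), the stopped process $M^{f_R}(\cdot\wedge\tau_R)$ coincides with $M^f(\cdot\wedge\tau_R)$, exhibiting $M^f$ as a continuous local martingale.

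The main obstacle is promoting $M^f$ from a local to a true martingale, which requires uniform integrability of $\{M^f(t\wedge\tau_R)\}_R$. For $f(y)=y_i$ this follows directly from $(\mathbf 1)$ and the stopped identity via $|X_i(t\wedge\tau_R)|\leq|x_i|+|M^f(t\wedge\tau_R)|+\int_0^t|b|(X(s))\,ds$. For $f(y)=y_iy_j$ the linear growth of $\nabla f$ forces an a priori bound on $\mathbb E_x\int_0^t|b|(X(s))|X(s)|\,ds$, which I would obtain by repeating the argument of $(\mathbf 1)$ with~\eqref{j_2_w} applied at $h$ an approximation of $y\mapsto|y|$: for $\nu$ chosen sufficiently large in the definition of $\rho$, $\||b_n|^{2/q}\rho\,|y|\|_q^q=\int|b_n|^2\rho^q|y|^q\,dy$ is finite uniformly in $n$ by form-boundedness tested against $\rho^{q/2}|y|^{q/2}$. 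Combining this with the local-martingale identity for $f(y)=|y|^2$ (whose second derivative is constant) yields $\sup_R\mathbb E_x|X(t\wedge\tau_R)|^2<\infty$, and standard optional sampling then completes the promotion.
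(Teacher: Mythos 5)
Your part $(\mathbf 1)$ is essentially the paper's step $(\mathbf a)$: the weighted bound \eqref{j_2_w} plus form-boundedness tested against $\rho^{q/2}$ (and against $\rho^{q/2}|y|^{q/2}$ for the drift-times-$|y|$ bound) is exactly the mechanism used there. One small inaccuracy: \eqref{j_2_w} is proved for the approximating generators $\Lambda_{C_\infty}(a_n,\nabla a_n+b_n)$, so it does not transfer to the limit resolvent ``via \eqref{consist}'' (which only relates the $L^q$ and $C_\infty$ realizations of the limit operator); the correct route is either to bound $\mathbb E^n_x\int_0^t(|b_n|\varphi_k)(X(s))ds$ uniformly in $n,k$ and pass to the limit in $n$ using the convergence from step $(\mathbf b)$ of Lemma \ref{Y_prop} (as the paper does), or to let $n\to\infty$ in \eqref{j_2_w} for fixed $m$, $h\in C_c$ using \eqref{conv_c}. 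This is fixable.

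The genuine gap is in the promotion of the local martingale to a true martingale for $f(y)=y_iy_j$. Your localization $M^{f}(\cdot\wedge\tau_R)=M^{f_R}(\cdot\wedge\tau_R)$ is fine, and the stopped identity for $g(y)=|y|^2$ together with $\mathbb E_x\int_0^t|b||X|\,ds<\infty$ does give $\sup_R\mathbb E_x|X(t\wedge\tau_R)|^2<\infty$. But for $f=y_iy_j$ the quantity you must control is the uniform integrability of $\{X_iX_j(t\wedge\tau_R)\}_R$, and $L^2$-boundedness of $X(t\wedge\tau_R)$ yields only $L^1$-boundedness of $X_iX_j(t\wedge\tau_R)$, which is not uniform integrability; you provide neither a dominating integrable function (that would require something like $\mathbb E_x\sup_{s\le t}|X(s)|^2<\infty$, which you have not established) nor a higher moment $\sup_R\mathbb E_x|X(t\wedge\tau_R)|^{2+\varepsilon}<\infty$. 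Also, for $f=y_i$ the displayed inequality goes the wrong way: it bounds $|X_i(t\wedge\tau_R)|$ by $|M^f(t\wedge\tau_R)|$, whereas uniform integrability of $M^f(t\wedge\tau_R)$ requires uniform integrability of $X_i(t\wedge\tau_R)$ — this does follow, but only from the second-moment bound you derive afterwards, not ``directly from $(\mathbf 1)$''. The gap could be patched by iterating your scheme to get higher moments (test functions growing like $|y|^{2+\varepsilon}$ and weights with larger $\nu$), but the paper's own argument avoids the issue entirely by truncating the function rather than the time: with $f_k:=\xi_k f\in C_c^\infty$ it proves $(\mathbf a)$ $\mathbb E_x\int_0^t|b|(|\nabla f|+\alpha|f|)(X(s))ds<\infty$, $(\mathbf b)$ the analogous second-order bound, and $(\mathbf c)$ $\mathbb E_x[(1+|X(t)|^2)]<\infty$ (via the martingale identity for $\xi_k(1+|y|^2)$ and monotone convergence), and then lets $k\to\infty$ in the martingale identity for $M^{f_k}$ by dominated convergence — the domination $|f_k|\le 1+|y|^2$ acts on the fixed random variable $X(t)$, so no uniform integrability over a family of stopped variables is needed.
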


\begin{proof}We modify the proof of \cite[Lemma 5]{KiS4}.

Fix a $\upsilon \in C^\infty([0,\infty[)$, $\upsilon (s)=1$ if $0 \leq s \leq 1$, $\upsilon (s)=0$ if $s \geq 2$.
Set
\begin{equation}
\label{xi_k}
\xi_k(y):=\left\{
\begin{array}{ll}
\upsilon (|y|+1-k) & |y| \geq k, \\
1 & |y|<k.
\end{array}
\right.
\end{equation}
Define $f_k:=\xi_k f \in C_c^\infty(\mathbb R^d)$.
Set $\alpha:=\|\nabla \xi_k\|_\infty$, $\beta:=\|\Delta \xi_k\|_\infty$ ($\alpha, \beta$ don't depend on $k$).
Fix $0<T<\infty$. In what follows, $0 < t \leq T$.

$(\mathbf a) \quad \mathbb E_x\int_0^t (|b| (|\nabla f| + \alpha|f|)) (X(s))ds <\infty.$

 Indeed, set $\varphi:=|\nabla f| + \alpha|f| \in C \cap W^{1,2}_{\loc}$, $\varphi_k:=\xi_{k+1} \varphi \in C_c \cap W^{1,2}$.
First, let us prove that
\[
\mathbb E^n_x\int_0^t (|b_n| \varphi_k) (X(s))ds  \leq \const \text{ independent of } n, k.
\]
Fix $p > 2 \vee (d-2)$ satisfying \eqref{cond0}. By \eqref{eta_two_est}, $\sqrt{(\rho \varphi)^p} \in W^{1,2}$.
We have
\begin{align*}
& \mathbb E^n_x\int_0^t (|b_n| \varphi_k) (X(s))ds=\int_0^t e^{-s\Lambda_{C_\infty}(a_n,\nabla a_n + b_n)}|b_n| \varphi_k(x)ds \\
& \leq e^{\mu T} (\mu+\Lambda_{C_\infty}(a_n,\nabla a_n + b_n))^{-1}|b_n| \varphi_k(x) \\
& (\text{we apply \eqref{j_2_w}} ) \\
& \leq e^{\mu T} \rho(x)^{-1}K_2 \langle |b_n|^2 (\rho \varphi_k)^p \rangle^{\frac{1}{p}}   \leq e^{\mu T} \rho(x)^{-1}K_2\langle |b_n|^2 (\rho \varphi)^p \rangle^{\frac{1}{p}}  \\
& \big(\text{we use } b_n \in \mathbf F_{\delta}, \lambda \neq \lambda(n)\big) \\
& \leq e^{\mu T} \rho(x)^{-1}K_2 \delta^{\frac{1}{p}} 
\|(\lambda-\Delta)^\frac{1}{2} \sqrt{(\rho \varphi)^p} \|_2^{\frac{2}{p}}
 <\infty.
\end{align*}
By step $(\mathbf b)$ in the proof of Lemma \ref{Y_prop}, $
\mathbb E^n_x\int_0^t
(|b_n| \varphi_k)(X(s))ds \rightarrow \mathbb E_x\int_0^t
(|b|\varphi_k)(X(s))ds$ as $n \uparrow \infty$.
Therefore, $\mathbb E^n_x\int_0^t (|b_n| \varphi_k) (X(s))ds  \leq C$ implies
$
\mathbb E_x\int_0^t (|b| \varphi_k) (X(s))ds  \leq C \; (C \neq C(k)).
$
Now, Fatou's Lemma yields the required.

\smallskip

$(\mathbf b)$ \quad  For every $t>0$, \; 
$
\mathbb E_x\int_0^t (|a \cdot \nabla^2 f| + 2 \alpha |\nabla f| + \beta |f|)(X(t)) ds <\infty.
$

The proof is similar to the proof of $(\mathbf a)$ (use \eqref{j_1_w} instead of \eqref{j_2_w}). 

\smallskip

$(\mathbf c)$ \quad For every $t>0$,\;
$
\mathbb E_x[|f|(X(t))]<\infty. 
$

Indeed, set $g(y):=1+|y|^2$, $y \in \mathbb R^d$. Since $|f| \leq g$, it suffices to show that $\mathbb E_x[g(X(t))]<\infty$. Set $g_k(y):=\xi_k(y) g(y)$. By Lemma \ref{thm1},
$$
\mathbb E_x[g_k(X(t))]  = g_k(x) - \mathbb E_x \int_0^t  (-a \cdot \nabla^2 g_k)(X(s))ds -  \mathbb E_x \int_0^t (b \cdot \nabla g_k)(X(s))ds. 
$$
Note that
$$
\sup_k\mathbb E_x\int_0^t (|b| |g_k|) (X(s))ds <\infty, \quad \sup_k\mathbb E_x\int_0^t |a \cdot \nabla^2 g_k|(X(s)) ds <\infty
$$
for, arguing as in the proofs of $(\mathbf a)$ and $(\mathbf b)$, we have:
$$
\mathbb E_x\int_0^t (|b| (|\nabla g| + \alpha|g|)) (X(s))ds <\infty, \quad \mathbb E_x\int_0^t (|a \cdot \nabla^2 g| + 2 \alpha |\nabla g| + \beta |g|)(X(t)) ds <\infty.
$$
Therefore, $\sup_k \mathbb E_x[g_k(X(t))]<\infty$, and so, by the Monotone Convergence Theorem, $\mathbb E_x[g(X(t))]<\infty$. This completes the proof of $(\mathbf{c})$.

\smallskip

Let us complete the proof of Lemma \ref{thm2}. 
By $(\mathbf a)$, $
\mathbb E_x\int_0^t |b(X(s))|ds<\infty$.
By $(\mathbf a)$-$(\mathbf c)$,
$$
M^f(t):=f(X(t)) - f(x) + \int_0^t (-a \cdot \nabla f + b\cdot\nabla f)(X(s))ds, \quad t>0,
$$
satisfies $\mathbb E_x[|M^{f}(t)|]<\infty$ for all $t>0$.
By Lemma \ref{thm1}, for every $k$, $M^{f_k}(t)$ is a martingale relative to $(\Omega,\mathcal G_t, \mathbb P_x)$.  By $(\mathbf a)$ and the Dominated Convergence Theorem, since $|\nabla f_k| \leq |\nabla f| + \alpha|f|$ for all $k$, we have
$
\mathbb E_x\int_0^t (b \cdot \nabla f_k) (X(s))ds \rightarrow \mathbb E_x\int_0^t (b \cdot \nabla f) (X(s))ds.
$
By $(\mathbf b)$,
$
\mathbb E_x\int_0^t (a \cdot \nabla^2 f_k) (X(s))ds \rightarrow \mathbb E_x\int_0^t (a \cdot \nabla^2 f) (X(s))ds.
$
By $(\mathbf c)$,
$
\mathbb E_x[f_k(X(t))] \rightarrow \mathbb E_x[f(X(t))]. 
$
So,
$M^{f}(t)$ is also a martingale on $(\Omega,\mathcal G_t, \mathbb P_x)$.
The proof of Lemma \ref{thm2} is completed.
\end{proof}

We are in position to complete the proof of Theorem \ref{mainthm}(\textit{i})-(\textit{iii}). Lemma \ref{cont_prop} yields (\textit{i}). Lemma \ref{thm2} yields (\textit{ii}) and (\textit{iii}). 
The proof of Theorem \ref{mainthm} is completed.

\appendix

\section{}

We prove the assertion of remark \ref{unique_rem}.
For $f \in C_c^\infty$, $x \in \mathbb R^d$, denote
$$
R^n_{\mu}f(x):=\mathbb E_{\mathbb P^n_x}\int_0^\infty e^{-\mu s} f(X(s))ds\quad \biggl(=(\mu+\Lambda_{C_\infty}(\tilde{a}_n,\nabla \tilde{a}_n + \tilde{b}_n))^{-1}f(x)\biggr), $$
$$R^Q_{\mu}f(x):=\mathbb E_{\mathbb Q_x}\int_0^\infty e^{-\mu s} f(X(s))ds, \quad \mu>0.
$$
Let us show that $(\mu+\Lambda_{C_\infty}(a,\nabla a + b))^{-1}f(x)=R^Q_\mu f(x)$ for all $\mu>0$ sufficiently large; this would imply that $\{\mathbb Q_x\}_{x \in \mathbb R^d}=\{\mathbb P_x\}_{x \in \mathbb R^d}$.

\smallskip

We have:

\smallskip

1) $
R^n_\mu f(x) \rightarrow R_\mu^Q f(x)$ (the assumption).

\medskip

2) $\|R_\mu^Qf\|_2 \leqslant (\mu-\omega_2)^{-1}\|f\|_2$,  $\mu>\omega_2$.

\smallskip

Indeed,
$R_\mu^n f=(\mu+\Lambda_{2}(\tilde{a}_n,\nabla \tilde{a}_n + \tilde{b}_n))^{-1}f$,  $f \in C_c^\infty$. Since $e^{-t\Lambda_{2}(\tilde{a}_n,\nabla \tilde{a}_n + \tilde{b}_n)}$ is a quasi contraction on $L^2$, $\|(\mu+\Lambda_{2}(\tilde{a}_n,\nabla \tilde{a}_n + \tilde{b}_n))^{-1}\|_{2 \rightarrow 2} \leqslant (\mu-\omega_2)^{-1}$, $\mu>\omega_2$, $0<\omega_2 \neq \omega_2(n)$. Thus, $\|R_\mu^nf\|_2 \leqslant (\mu-\omega_2)^{-1}\|f\|_2$ for all $n$.
Now 2) follows from 1) by a weak compactness argument in $L^2$.

By 2), $R_\mu^Q$  admits extension by continuity to $L^2$, which we denote by $R_{\mu,2}^Q$.

\smallskip

3) $\|(-(a-I)\cdot \nabla^2 + b\cdot\nabla)(\mu-\Delta)^{-1}\|_{2\rightarrow 2} \leqslant \|a-I\|_\infty + \delta$ (we use $b \in \mathbf{F}_\delta$).

\smallskip

4) $(\mu+\Lambda_{2}(a,\nabla a + b))^{-1}f = (\mu-\Delta)^{-1}\big(1+((a-I)\cdot \nabla^2 - b\cdot\nabla) (\mu-\Delta)^{-1}\big)^{-1} f$.

Indeed, by our assumptions $\|a-I\|_\infty + \delta<1$, so in view of 3) the RHS is well defined. 
Clearly, 4) holds for $a=a_n$, $b=b_n$. We pass to the limit $n \rightarrow \infty$ using \eqref{conv1}.

\smallskip

5) $(\mu+\Lambda_{C_\infty}(a,\nabla a + b))^{-1} f = R_\mu^Q f$ a.e.~on  $\mathbb R^d$.

Indeed, since $\{\mathbb Q_x\}$ is a weak solution of \eqref{sde1}, we have by It\^{o}'s formula 
$$
(\mu-\Delta)^{-1} h= R_\mu^Q[\big(1+((a-I)\cdot \nabla^2 - b\cdot\nabla) (\mu-\Delta)^{-1}\big)h], \quad h \in C_c^\infty.
$$
Since $\|\big(1+((a-I)\cdot \nabla^2 - b\cdot\nabla) (\mu-\Delta)^{-1}\big)\|_{2 \rightarrow 2}<\infty$ (by 3)), we have, in view of 2),
$$
(\mu-\Delta)^{-1} g= R_{\mu,2}^Q[\big(1+((a-I)\cdot \nabla^2 - b\cdot\nabla) (\mu-\Delta)^{-1}\big)g], \quad g \in L^2.
$$
Take $g=\big(1+((a-I)\cdot \nabla^2 - b\cdot\nabla) (\mu-\Delta)^{-1}\big)^{-1}f$, $f \in C_c^\infty$. Then by 4)
$
(\mu+\Lambda_2(b))^{-1}f=R_{\mu,2}^Q f
$. By the consistency property $(\mu+\Lambda_{C_\infty}(b))^{-1}|_{C_c^\infty \cap L^2}=(\mu+\Lambda_2(b))^{-1}|_{C_c^\infty \cap L^2}$, and the result follows.

\smallskip

6) Fix a $q > 2 \vee (d-2)$ satisfying the assumptions of the remark. Since $R_\mu^n f=(\mu+\Lambda_{q}(\tilde{a}_n,\nabla \tilde{a}_n + \tilde{b}_n))^{-1}f$, we obtain by \eqref{reg_star} that for all $\mu>\mu_0$
$$
\|\nabla R_\mu^n f\|_{qj} \leqslant K \|f\|_q, \quad j=\frac{d}{d-2}, \quad \mu>\mu_0.
$$
By a weak compactness argument in $L^{qj}$, in view of 1), we have $|\nabla R_\mu^Q f| \in L^{qj}$, and there is a subsequence of $\{R_\mu^nf\}$ (without loss of generality, it is $\{R_\mu^nf\}$ itself) such that
$$
\nabla R_\mu^n f \overset{w}{\longrightarrow} \nabla R_\mu^Q f \quad \text{ in } L^{qj}(\mathbb R^d,\mathbb R^d).
$$
By Mazur's Lemma, there is a sequence of convex combinations of the elements of $\{\nabla R_\mu^n f\}_{n=1}^\infty$ that converges to $\nabla R_\mu^Q f$ strongly in $L^{qj}(\mathbb R^d,\mathbb R^d)$, i.e.
$$
\sum_\alpha c_\alpha \nabla R_\mu^{n_\alpha} f \overset{s}{\longrightarrow } \nabla R_\mu^Q f \quad \text{ in $L^{qj}(\mathbb R^d,\mathbb R^d)$}.
$$
Now, in view of 1), the latter and the Sobolev Embedding Theorem yield $\sum_\alpha c_\alpha R_\mu^{n_\alpha} f \overset{s}{\longrightarrow } R_\mu^Q f$ in $C_\infty$.
Therefore, by 5),  $(\mu+\Lambda_{C_\infty}(a,\nabla a + b))^{-1} f(x) = R_\mu^Q f(x)$ for all $x \in \mathbb R^d$, $f \in C_c^\infty$, as needed.

\end{document}